\documentclass[a4paper, 11pt, reqno, final]{amsart}

\usepackage{amsmath,amssymb,amsthm}

\numberwithin{equation}{section}
\allowdisplaybreaks

\theoremstyle{definition}
 \newtheorem{thm}{Theorem}[section]
 
 \newtheorem{lem}[thm]{Lemma}
 \newtheorem{cor}[thm]{Corollary}
 
 \newtheorem{fct}[thm]{Fact}
 
 \newtheorem{eg} [thm]{Example}
 \newtheorem*{ack}{Acknowledgements}
 \newtheorem*{not*}{Notation}
 \newtheorem*{rmk*}{Remark}
 \newtheorem*{thm*}{Theorem}

\newcommand{\bbC}{\mathbb{C}}

\newcommand{\bbQ}{\mathbb{Q}}
\newcommand{\bbZ}{\mathbb{Z}}
\newcommand{\calF}{\mathcal{F}}

\newcommand{\calH}{\mathcal{H}}
\newcommand{\calL}{\mathcal{L}}

\newcommand{\calW}{\mathcal{W}}

\newcommand{\frksl}{\mathfrak{sl}}
\newcommand{\SU}{\mathrm{SU}}

\newcommand{\ep}{\epsilon}
\newcommand{\ve}{\varepsilon}
\newcommand{\rk}{\mathrm{rank}}
\newcommand{\Vir}{\mathrm{Vir}}

\newcommand{\seteq}{\mathbin{:=}}
\newcommand{\ket}[1]{\left|  #1  \right>}
\newcommand{\bra}[1]{\left< #1 \right|}
\newcommand{\nop}{\genfrac{}{}{0pt}{1}{\circ}{\circ}}
\newcommand{\no}[1]{\nop #1 \nop}
\newcommand{\vect}[1]{\overrightarrow{#1}}
\makeatletter
\def\Res{\mathop{\operator@font Res}}
\def\Ind{\mathop{\operator@font Ind}\nolimits}
\def\End{\mathop{\operator@font End}}
\makeatother


\title[Logarithmic primary of Virasoro algebra]{Norm of logarithmic primary of \\ Virasoro algebra}
\date{October 17, 2010; revised June 11, 2011}
\author{Shintarou Yanagida}
\keywords{Virasoro algebra, singular vector, free field realization, Jack symmetric polynomials}
\subjclass[2010]{17B68, 05E05}

\address{Kobe University, Department of Mathematics, Rokko, Kobe 657-8501, Japan}
\email{yanagida@math.kobe-u.ac.jp}


\begin{document}

\begin{abstract}
We give an algebraic proof of the formula on the norm of logarithmic primary of Virasoro algebra, which was proposed by Al.~Zamolodchikov. 
This formula appears in the recursion formula for the norm of Gaiotto state, 
which guarantees the AGT relation for the four-dimensional 
SU(2) pure gauge theory. 
\end{abstract}

\maketitle

\section{Introduction}

This paper discusses algebraic or combinatorial 
calculation of certain elements in the Verma module of Virasoro algebra.
Our main result is Theorem \ref{thm:main} stated in \S \ref{subsect:main},
where a mathematcal proof of the formula on the norm of logarithmic primary of Virasoro algebra is given. 
This formula was proposed in \cite{Z:2004},
and the proof \cite{HJS:2010} of the AGT relation for pure SU(2) gauge theory \cite{G:2009} 
depends on it. 
The result of this paper is the last piece of the proof of the AGT relation.

However, in order to state that, we need to introduce several notations and recall 
well-known facts on Virasoro algebra.
Subsections \S \ref{subsec:vir} and \S \ref{subsec:efsv} are 
devoted to these preliminaries.
Such topics are often treated in the textbooks of conformal field theory, 
such as \cite{FMS:1997}, \cite{ID:1989} and \cite{S:2003}.
The reader who is familiar with Virasoro algebra 
may skip to \S \ref{subsec:nlp}.

In \S \ref{subsec:nlp} we introduce the norm of logarithmic primary, 
which is the main topic of this paper 
and appears directly in the statement of Theorem \ref{thm:main}.

In \S \ref{subsec:AGT} and \S \ref{subsec:prfAGT}
we mention to AGT conjectures/relations and its connection to our main theorem.
These subsections are a detour, 
but it will be interesting for those working on AGT conjectures.

In the last subsection \S \ref{subsect:main} of this introduction,
we state our main theorem and 
the contents of the main part of this paper.

\subsection{Virasoro algebra and singular vectors}\label{subsec:vir}

A singular vector in the Verma module of Virasoro algebra is a fundamental 
object in the two-dimensional conformal field theory and the representation 
theory of Virasoro algebra 
since its emergence in the classical paper \cite{BPZ:1984}.

Let us recall the definition of singular vectors, 
fixing notations on Virasoro algebra and its Verma module.
The Virasoro algebra $\Vir$ is the Lie algebra over $\bbC$ 
generated by $L_n$ ($n\in\bbZ$) and $C$ (central) with the relation
\begin{align}\label{eq:Vir}
[L_m,L_n]=(m-n)L_{m+n}+\dfrac{C}{12}m(m^2-1)\delta_{m+n,0},\quad
[L_n,C]=0.
\end{align}
$\Vir$ has the triangular decomposition 
$\Vir=\Vir_{+}\oplus\Vir_{0}\oplus\Vir_{-}$ with 
$\Vir_{\pm} \seteq \oplus_{\pm n>0}\bbC L_n$ and
$\Vir_{0} \seteq \bbC C \oplus \bbC L_0$.

Let $c$ and $h$ be complex numbers.
Let $\bbC_{c,h}$ 
be the one-dimensional representation 
of the subalgebra $\Vir_{\ge0} \seteq \Vir_{0}\oplus \Vir_{+}$,
where $\Vir_{+}$ acts trivially, $L_0$ acts as multiplication by $h$, 
and $C$ acts as multiplication by $c$.
Then the Verma module $M(c,h)$ is defined by
\begin{align*}
M(c,h)\seteq\Ind_{\Vir_{\ge0}}^{\Vir}\bbC_{c,h}.
\end{align*}
Obeying the notation in physics literature, 
we denote by $\ket{c,h}$ a fixed basis of $\bbC_{c,h}$.
Then one has $\bbC_{c,h}=\bbC \ket{c,h}$ and $M(c,h)=\Vir \ket{c,h}$.

$M(c,h)$ has an $L_0$-weight decomposition: 
$M(c,h)=\bigoplus_{n\in\bbZ_{\ge 0}} M(c,h)_{n}$ 
with
\begin{align*}
M(c,h)_{n}\seteq\{v\in M(c,h)\mid L_0 v=(h+n)v \}.
\end{align*}

A basis of $M(c,h)_{n}$ can be described by partitions.
In this paper, a partition of positive integer $n$ 
means a non-increasing sequence 
$(\lambda_1,\lambda_2,\ldots,\lambda_k)$
of positive integers such that $\sum_{i=1}^k \lambda_i=n$.
We also consider the empty sequence $\emptyset$ 
as a partition of the number $0$. 
The symbol $\lambda \vdash n$ means that $\lambda$ is a partition of $n$. 
We also define $|\lambda|\seteq\sum_{i=1}^k \lambda_i$.
For a partition 
$\lambda=(\lambda_1,\lambda_2,\ldots,\lambda_k)$ of $n$ 
we use the symbol 
\begin{align}\label{eq:L-lambda}
L_{-\lambda}\seteq L_{-\lambda_k} L_{-\lambda_{k-1}}\cdots L_{-\lambda_1},
\end{align}
an element of the enveloping algebra  $U(\Vir_{-})$ of the subalgebra $\Vir_{-}$.
Using these notations, one finds that the set
\begin{align*}
\{L_{-\lambda} \ket{c,h} \mid \lambda \vdash n\},
\end{align*}
is a basis of $M(c,h)_n$.

An element $v$ of $M(c,h)_n$ is called a singular vector of level $n$ if 
\begin{align}\label{eq:sing:def}
L_k v=0 \ \text{ for any } k\in\bbZ_{>0}.
\end{align}

The existence of singular vector restricts the values 
of the highest weights $(c,h)$.
To see this phenomena, it is necessary to recall the Kac determinant formula.

First, we introduce the (restricted) dual Verma module $M^*(c,h)$.
This is a right $\Vir$-module generated by $\bra{c,h}$ with 
 $\bra{c,h}\Vir_{+}=0$, $\bra{c,h} L_0=h\bra{c,h}$ 
and $\bra{c,h} C=c\bra{c,h}$.
It has an $L_0$-weight decomposition $\bigoplus_{n\in\bbZ_{\ge0}}M^*(c,h)_n$ 
with $M^*(c,h)_n\seteq\{v\in M^*(c,h) \mid v L_0=(h-n)v\}$.
$M^*(c,h)_n$ has a basis 
$\{\bra{h} L_{\lambda} \mid \lambda\vdash n\}$
with 
\begin{align}\label{eq:L+lambda}
L_{\lambda}\seteq  L_{\lambda_1} L_{\lambda_2} \cdots L_{\lambda_k}.
\end{align}
for the partition 
$\lambda=(\lambda_1,\lambda_2,\ldots,\lambda_k)$.

Next we introduce the contravariant form.
It is a bilinear map on the modules 
\begin{align}
\cdot: M^*(c,h) \times M(c,h) \to \bbC \label{eq:contrav}
\end{align} 
determined by 
\begin{align*}
&\bra{h}\cdot\ket{h}=1,\quad 
\bra{h}u_1u_2 \cdot\ket{h}=\bra{h}u_1 \cdot u_2\ket{h}
=\bra{h}\cdot u_1 u_2\ket{h}\ (u_1,u_2\in \Vir).
\end{align*}
We usually omit the symbol $\cdot$ and write 
$\bra{h}u\ket{h}\seteq \bra{h}u\cdot\ket{h}$
as in the physics literature.
By counitng $L_0$-weights   one can easily see that 
\begin{align}\label{eq:kac:block}
M^*(c,h)_m\cdot M(c,h)_n=0  \text{ if } m\neq n.
\end{align}
This bilinear form is contravariant in the following sense: 
\begin{align}
\bra{h}L_\lambda L_{-\mu}\ket{h}=\bra{h}L_\mu L_{-\lambda}\ket{h} 
\quad \text{ for any }\ \lambda,\ \mu. \label{eq:contrav:contrav}
\end{align}
It is usually called the contravariant form (or Shapovalov form) 
on the Verma module.

Recalling the basis \eqref{eq:L-lambda} and the dual basis \eqref{eq:L+lambda},
we define 
\begin{align*}
K_{\lambda,\mu}(c,h)\seteq \bra{c,h}L_{\mu}L_{-\lambda}\ket{c,h}.
\end{align*}
Then the properties of the contravariant form are encoded in the 
(infinite size) matrix $(K_{\lambda,\mu})$, where $\lambda$ and $\mu$ 
run over the set of all partitions.
However, because of \eqref{eq:kac:block},
we only need to consider the $p(n)\times p(n)$ matrix 
\begin{align}\label{eq:Kac:matrix}
K_n\seteq(K_{\lambda,\mu})_{\lambda,\mu\vdash n}
\end{align}
for each $n\in\bbZ_{\ge 0}$.
By \eqref{eq:contrav:contrav} it is a symmetric matrix.
Let us write down some examples:
\begin{align}
\label{eq:Kac:matrix:eg}
\begin{split}
K_1&=(K_{(1),(1)})=(2h),
\\
K_2&=\begin{pmatrix}K_{(1^2),(1^2)}&K_{(1^2),(2)} \\ 
                    K_{(2),(1^2)} & K_{(2),(2)}\end{pmatrix}
    =\begin{pmatrix}4h(1+2h)&6h \\ 6h & 4h+c/2\end{pmatrix},
\\
K_3&=\left(\begin{array}{lll}
                    K_{(1^3),(1^3)}&K_{(1^3),(2,1)}&K_{(1^3),(3)} \\ 
                    K_{(2,1),(1^3)}&K_{(2,1),(2,1)}&K_{(2,1),(3)} \\
                    K_{(3),  (1^3)}&K_{(3)  ,(2,1)}&K_{(3)  ,(3)} 
     \end{array}\right)\\
    &=\begin{pmatrix}24h(1+h)(1+2h)& 12h(1+3h)  & 24h \\ 
                     12h(1+3h)     & 8h^2+8h+c h& 10h \\
                     24h           & 10h        &6h+2c
      \end{pmatrix}.
\end{split}
\end{align}
The determinant $\det K_n$ is called the Kac determinant.
As conjectured in \cite{K:1979} and shown in \cite{FF:1982}, \cite{FF:1990}, 
it has the factored form
\begin{align}
\label{eq:Kac}
\det K_n(c,h)
=\prod_{\lambda \vdash n}2^{\ell(\lambda)}z_\lambda
 \times
 \prod_{\substack{r,s\in\bbZ_{\ge 1}\\ r s\le n}} 
 \big(h-h_{r,s}\big)^{p(n-r s)} .
\end{align}
Here $\ell(\lambda)$ is the length of the partition $\lambda$, 
$z_{\lambda}$ is given by
\begin{align}\label{eq:z_lambda}
z_{\lambda}\seteq\prod_{i\in\bbZ_{\ge1}}i^{m_i(\lambda)} m_i(\lambda) ! 
\quad \text{ with} \quad
m_i(\lambda)\seteq \#\{1\le i\le \ell(\lambda) \mid \lambda_j=i \},
\end{align}
and 
$p(n)\seteq\#\{\lambda\mid\lambda\vdash n\}$ 
is the partition number of $n$. 
To describe the zeros $h_{r,s}$, let us introduce a parametrization of $c$: 
\begin{align}
c=c(t)\seteq 13-6(t+t^{-1}). \label{eq:ct}
\end{align}
Then the rewritten form $h_{r,s}(t)\seteq h_{r,s}|_{c=c(t)}$ is given by 
\begin{align}
h_{r,s}(t)\seteq \dfrac{(r-s t)^2-(t-1)^2}{4t}. \label{eq:hrst}
\end{align}
In the following, we often use the pair $(c(t),h)$ for the highest weights. 
Note that by the symmetry 
\begin{align}\label{eq:symmetry:c}
c(t)=c(t^{-1}),
\end{align}
we have $M(c(t),h)=M(c(t^{-1}),h)$.

The definition \eqref{eq:sing:def} of the singular vector $v$ indicates 
that in the expansion 
$v=\sum_{\lambda\vdash n} c_\lambda L_{-\lambda}\ket{c,h}$ 
with respect to the basis \eqref{eq:L-lambda}, 
the set of coefficients $(c_\lambda)_{\lambda\vdash n}$ forms an eigenvector 
of the matrix \eqref{eq:Kac:matrix} with eigenvalue $0$.
Thus if a singular vector exists in $M(c,h)$, 
then the Kac determinant \eqref{eq:Kac} vanishes, 
i.e., the pair $(c,h)$ is expressed as $(c(t),h_{r,s}(t))$ 
for some $r,s\in\bbZ_{\ge 1}$ with $r s\le n$.

\subsection{Explicit formula of singular vectors}\label{subsec:efsv}

Several studies explored explicit forms of singular vectors. 
First, let us mention 

\begin{fct}[\cite{F:1994}]\label{fct:fuchs}
One can write the singular vector $\ket{\chi_{r,s}}$ 
on the Verma module $M(c(t),h_{r,s}(t))$ as 
\begin{align*}
\ket{\chi_{r,s}}=P_{r,s}(t)\ket{c(t), h_{r,s}(t)},
\end{align*}
with
\begin{align}\label{eq:Fuchs}
P_{r,s}(t)=L_{-1}^{rs}+\cdots\in U(\Vir_{-})\otimes\bbC[t,t^{-1}].
\end{align}
\end{fct}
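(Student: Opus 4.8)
The plan is to split the statement into three assertions and prove them in turn: that a singular vector exists at level $rs$ in $M(c(t),h_{r,s}(t))$ and is unique up to scalar; that its expansion coefficients can be taken in $\bbC[t,t^{-1}]$; and that the coefficient of $L_{-1}^{rs}$ is nonzero, which is what licenses the normalization $P_{r,s}(t)=L_{-1}^{rs}+\cdots$.

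First I would establish existence and uniqueness at a generic value of $t$ using the Kac determinant \eqref{eq:Kac}. With $n=rs$ and $h=h_{r,s}(t)$, the factor attached to the pair $(r,s)$ is $(h-h_{r,s}(t))^{p(rs-rs)}=(h-h_{r,s}(t))^{1}$, so $\det K_{rs}(c(t),h_{r,s}(t))=0$. To see the zero is simple I would check that for generic $t$ the values $h_{r',s'}(t)$ are pairwise distinct over positive $(r',s')$: from \eqref{eq:hrst} an equality $h_{r',s'}(t)=h_{r'',s''}(t)$ forces $(r'-s't)^2=(r''-s''t)^2$, and the ``minus'' branch $r'+r''=(s'+s'')t$ is impossible for generic $t$ and positive integers. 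Hence no factor other than $(h-h_{r,s}(t))$ vanishes at level $rs$, the zero of $\det K_{rs}$ is simple, $\rk K_{rs}=p(rs)-1$, and $\ker K_{rs}$ is one-dimensional, giving $v=\sum_{\lambda\vdash rs}c_\lambda L_{-\lambda}\ket{c(t),h_{r,s}(t)}$ unique up to scalar. That $v$ is genuinely singular follows since, for each $k\ge 1$ and $\nu\vdash(rs-k)$, the covector $\bra{h}L_\nu L_k$ lies in $M^*(c(t),h_{r,s}(t))_{rs}$ and therefore pairs to zero with $v\in\ker K_{rs}$; thus $L_k v\in\ker K_{rs-k}$. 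The same distinctness argument shows $\det K_m(c(t),h_{r,s}(t))\neq0$ for every $m<rs$, because the pair $(r,s)$ never occurs in the product range $r's'\le m$, so the form is nondegenerate at level $rs-k$ and $L_k v=0$.

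Next, for the dependence on $t$ I would pass from the single generic value to the whole family via the adjugate. The entries $K_{\lambda\mu}(c(t),h_{r,s}(t))$ lie in $\bbC[t,t^{-1}]$, and therefore so do the entries of $\mathrm{adj}(K_{rs})$. Since $(h-h_{r,s}(t))$ divides $\det K_{rs}(c(t),h)$, we have $K_{rs}\cdot\mathrm{adj}(K_{rs})=\det(K_{rs})I=0$ identically in $t$ after the substitution $h=h_{r,s}(t)$, so every column of $\mathrm{adj}(K_{rs})$ lies in $\ker K_{rs}$; and because the rank is maximal ($=p(rs)-1$) for generic $t$, some column is nonzero. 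Selecting such a column produces a singular vector all of whose coefficients are honest Laurent polynomials in $t$ (the condition $L_k v=0$ is Zariski-closed in $t$, so it persists to every $t$).

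The hard part, and the step that pins down the normalization, is to show that the coefficient of $L_{-1}^{rs}$ in this Laurent-polynomial singular vector is a \emph{unit} of $\bbC[t,t^{-1}]$, i.e. a nonzero monomial; dividing by it then yields $P_{r,s}(t)=L_{-1}^{rs}+\cdots$ with coefficients still in $\bbC[t,t^{-1}]$. I expect the cleanest route here to be the Feigin--Fuchs free field (Fock space) realization: one realizes $M(c(t),h_{r,s}(t))$ inside a Heisenberg Fock space with background charge governed by $t$, writes the singular vector as an iterated screening integral $\oint\cdots\oint$ of $s$ (equivalently $r$) screening currents, and reads off the $L_{-1}^{rs}$ component directly, the background-charge parameters being Laurent-monomial in $t$. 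The genuine obstacle is exactly this unit/nonvanishing property of the leading coefficient: existence, uniqueness, and Laurent-polynomial dependence on $t$ are formal consequences of the Kac determinant and the adjugate, whereas isolating the $L_{-1}^{rs}$ term requires the explicit screening-operator computation (equivalently, the Jack-polynomial matrix elements in the bosonic picture).
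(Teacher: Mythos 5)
This statement is imported into the paper as a \emph{Fact}, cited from \cite{F:1994}; the paper contains no proof of it, so there is no internal argument to compare yours against. Judged on its own terms, your first two steps are correct and standard: the simplicity of the zero of $\det K_{rs}(c(t),h)$ at $h=h_{r,s}(t)$ for generic $t$ (via the pairwise distinctness of the $h_{r',s'}(t)$), the resulting one-dimensionality of the radical, the verification that a radical vector is annihilated by all $L_k$ because $\det K_m(c(t),h_{r,s}(t))\neq 0$ for $m<rs$, and the adjugate trick producing a kernel vector with coefficients in $\bbC[t,t^{-1}]$ are all sound.

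The gap is that Step 3 is not a proof but a pointer, and Step 3 \emph{is} the theorem. Everything you actually establish yields a singular vector $\sum_\lambda d_\lambda(t)L_{-\lambda}\ket{c(t),h_{r,s}(t)}$ with $d_\lambda\in\bbC[t,t^{-1}]$; the content of Fuchs's result is that after dividing by $d_{(1^{rs})}(t)$ the coefficients \emph{remain} Laurent polynomials, equivalently that $d_{(1^{rs})}(t)$ divides every $d_\lambda(t)$ (i.e.\ is a unit for the primitive representative --- note that for the adjugate column itself the $(1^{rs})$-entry need not be a unit, since the cofactors can share a common factor, so your formulation of the reduction is slightly off). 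You correctly flag this as ``the hard part'' but then only gesture at the Feigin--Fuchs screening-operator construction without carrying out the extraction of the $L_{-1}^{rs}$ component; doing so honestly requires, at minimum, comparing the Fock-module expression with the Verma-module one (the map $\psi:M(c,h)\to\calF_\alpha$ need not be an isomorphism at the resonant weights), and relating the $a_{-1}^{rs}$ coefficient back to the $L_{-1}^{rs}$ coefficient through a triangularity argument. Even the weaker statement that the $L_{-1}^{rs}$ coefficient is nonzero for generic $t$ --- needed before one can normalize at all --- is left unproved; the asymptotics of Fact \ref{fct:AF} in this paper, for instance, control the $L_{-r}^s$ and $L_{-s}^r$ terms, not the $L_{-1}^{rs}$ term. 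So the proposal reduces the Fact to its genuinely nontrivial kernel and stops there.
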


The point is that the coefficients in \eqref{eq:Fuchs} are 
Laurent polynomials of $t$.

By direct calculations using the matrices \eqref{eq:Kac:matrix:eg},
one can obtain examples for small $r$ and $s$: 
\begin{align}\label{eq:sing:example}
\begin{split}
P_{1,2}(t)=&P_{2,1}(t^{-1})=L_{-1}^2-t          L_{-2},\\
P_{1,3}(t)=&P_{3,1}(t^{-1})
          =L_{-1}^3-4t         L_{-2}L_{-1}+2t     (2t-1)      L_{-3},\\
P_{1,4}(t)=&P_{4,1}(t^{-1})
          = L_{-1}^4-10t        L_{-2}L_{-1}^2+9t^2               L_{-2}^2\\
           &\phantom{P_{4,1}(t^{-1})=}
            +2t(12t-5)    L_{-3}L_{-1}-6t(6t^2-4t+1)L_{-4},\\
P_{2,2}(t)=&L_{-1}^4-2(t+t^{-1})L_{-2}L_{-1}^2+(t^2-2+t^{-2})     L_{-2}^2\\
           &-2(t-3+t^{-1})L_{-3}L_{-1}-3(t-2+t^{-1})L_{-4}.
\end{split}
\end{align}
In these examples the condition \eqref{eq:Fuchs} is clearly satisfied. 

As another simple remark, we have the equality 
\begin{align}\label{eq:t-t^-1}
P_{r,s}(t)=P_{s,r}(t^{-1}), 
\end{align}
which is an easy consequence of the symmetries \eqref{eq:symmetry:c} and 
\begin{align*}
h_{r,s}(t)=h_{s,r}(t^{-1}).
\end{align*}

In the late 1980s and the early 1990s, a series of works tried to write down 
$P_{r,s}(t)$ explicitly. 
\cite{BS:1988} gave an explicit formula for $P_{1,s}(t)$.
\cite{BFIZ:1991} gave an algorithm for constructing general 
$P_{r,s}(t)$ from $P_{1,s}(t)$. 
See also \cite[\S 8.A]{FMS:1997} for this algorithm.
\cite{K:1991} gave a formula of $P_{r,s}(t)$ 
using `analytic continuation' of $P_{1,s}(t)$.
See also \cite{F:1994} for a mathematically rigorous treatment 
of this `analytic continuation'. 
Although these formulas for $P_{r,s}(t)$ were enough for several studies 
on representation theory (see e.g. \cite{K:1992}), 
an `explicit' formula for $P_{r,s}(t)$ could not be obtained.

The paper \cite{MY:1995} shed a new light on this problem. 
It was found that the following two objects coincide up to normalization: 
the integral expression of the Jack symmetric function $J_{(s^r)}^{(t)}$ 
\cite[Chap.VI \S 10]{M:1995}
and the expression of $\ket{\chi_{r,s}}$ 
in terms of the screening operators for the Feigin-Fuchs bosonization 
of Virasoro algebra \cite{FF:1982}. 
We will recall this topic in \S \ref{sect:boson}.
See Fact \ref{fct:sing:jack} in \S \ref{subsec:jack} 
for the precise statement.

\subsection{Norm of logarithmic primary}\label{subsec:nlp}

Let us define an anti-homomorphism
\begin{align*}
\dagger: U(\Vir_{-}) \to U(\Vir_{+}), \quad L_{-n}\mapsto L_n.
\end{align*}
We will also denote this map as $L_{-n}^\dagger=L_n$.
Note that $(L_{-\lambda})^\dagger=L_\lambda$ under the notations 
\eqref{eq:L-lambda} and \eqref{eq:L+lambda}.
The anti-homomorphism $\dagger$ naturally induces a linear map 
$M(c,h)\to M^*(c,h)$, which is also written by $\dagger$. 
Note that $(\ket{c,h})^\dagger=\bra{c,h}$. 
We define $\bra{\chi_{r,s}}\seteq (\ket{\chi_{r,s}})^\dagger$.

For an element $v\in M(c,h)$, the norm of $v$ is defined to be 
\begin{align*}
v^\dagger\cdot v,
\end{align*}
where $\cdot$ is the contravariant form \eqref{eq:contrav}. 
For example, by the definition of singular vector \eqref{eq:sing:def}, 
it is obvious that 
\begin{align*}
\left<\chi_{r,s} | \chi_{r,s}\right>
=\bra{c(t),h_{r,s}(t)} P_{r,s}^\dagger(t) P_{r,s}(t) \ket{c(t),h_{r,s}(t)}=0.
\end{align*}

In \cite{IMM:1992} a curious observation was given on the norm 
\begin{align*}
 N_{1,s}(t,h)\seteq \bra{c(t), h} P_{1,s}^\dagger(t) P_{1,s}(t) \ket{c(t), h}
\end{align*}
of the vector $P_{1,s}(t) \ket{c(t), h}$.
They obtained a formula 
\begin{align}
\label{eq:Nr1}
\begin{split}
&N_{1,s}(t,h)=(h-h_{1,s}(t))\cdot R_{1,s}(t)+O((h-h_{1,s}(t))^2),\\
&R_{1,s}(t)\seteq 2 s! (s-1)! \prod_{k=1}^{s-1}(k^2 t^2-1).
\end{split}
\end{align}
A proof (containing some physical discussion) 
of this factor $R_{1,s}(t)$ was given in \cite{IMM:1992}.

There was a several year gap between the studies of this kind of norm of 
 $P_{r,s}(t) \ket{c(t), h}$. 
One of the reasons why such a calculation did not attract so much interest 
may be, as a simple matter, that there was no necessity.

In the early 2000s, a revival of the Liouville field theory occurred. 
Among several important papers, 
\cite[\S 6]{Z:2004} observed a generalization of the formula \eqref{eq:Nr1}. 
Let us denote
\begin{align}\label{eq:Nrs:def}
 N_{r,s}(t,h)\seteq \bra{c(t), h} P_{r,s}^\dagger(t) P_{r,s}(t) \ket{c(t), h}.
\end{align}
Using \eqref{eq:sing:example}, we can calculate some examples:
\begin{align*}
N_{1,1}(t,h)
=&2(h-h_{1,1}(t)),
\\
N_{1,2}(t,h)
=&4(t^2-1)(h-h_{1,2}(t))+8(h-h_{1,2}(t))^2,
\\
N_{1,3}(t,h)
=&24(t^2-1)(4t^2-1)(h-h_{1,3}(t))
\\
 &+8(16t^2-9)(h-h_{1,3}(t))^2+48(h-h_{1,3}(t))^3,
\\
N_{1,4}(t,h)
=&288(t^2-1)(4t^2-1)(9t^2-1)(h-h_{1,4}(t))
\\
 &+16(594t^4-481t^2+66)(h-h_{1,4}(t))^2
\\
 &+128(25t^2-9)(h-h_{1,4}(t))^3+384(h-h_{1,4}(t))^4
\\
N_{2,2}(t,h)
=&-8(t^2-1)(t^2-4)(t^{-2}-1)(t^{-2}-4)(h-h_{2,2}(t))
\\
 &+16(2t^{-4}-33t^{-2}+91-33t^2+2t^4)(h-h_{2,2}(t))^2
\\
 &+128(t^2-7+t^{-2})(h-h_{2,2}(t))^3+384(h-h_{2,2}(t))^4.
\end{align*}
Then it was conjectured that 
\begin{align}
\label{eq:Nrs}
\begin{split}
&N_{r,s}(t,h)\stackrel{?}{=}
 (h-h_{r,s}(t))\cdot R_{r,s}(t)+O((h-h_{r,s}(t))^2),\\
&R_{r,s}(t)\seteq 2 \prod_{\substack{(k,l) \in \bbZ^2, \\ 
                                    1-r \le k \le r,  \ 
                                    1-s \le l \le s,  \\
                                    (k,l)\neq(0,0),(r,s).}} 
                    (k t^{-1/2}+l t^{1/2}).
\end{split}
\end{align}
(Note that in \cite{Z:2004} 
 $N_{r,s}(t,h)$ is not expanded with respect to $h$, 
 but $\alpha$ given in \eqref{eq:FF:ch}, 
 the Heisenberg counterpart of the highest weight.)

The element $P_{r,s}(t)\ket{c(t),h}$ is named the logarithmic primary 
in \cite{Z:2004}, 
so that it is natural to call $N_{r,s}(t)$ 
the norm of logarithmic primary. 
This norm is the main object in this paper. 
The expression \eqref{eq:Nrs} is a generalization of \eqref{eq:Nr1}.
A physical derivation of the factor $R_{r,s}(t)$ was shown in \cite{Z:2004} 
based on the theory of Liouville field 
on the Poincar\'{e} disk \cite{ZZ:2001},
but it seems to lack mathematically rigorous arguments.  
An analogous explanation for the SUSY Liouville field theory was given in 
\cite{BZ:2006}, but no mathematical proof seems to exist.

As indicated in the last line of \cite[\S 6]{Z:2004}, 
the factor $R_{r,s}(t)$ resembles the dominator of the factor appearing in 
the recursive formula of conformal block given 
in \cite{Z:1984}, \cite{Z:1987} and \cite{ZZ:1996}.

\subsection{AGT relation}\label{subsec:AGT}

The resemblance of the factor $R_{r,s}(t)$, 
with the factor appearing in the recursive formula, 
was recently rediscovered in the context of AGT relations/conjectures.

The original AGT conjecture \cite{AGT:2010} states an equivalence between 
the Liouville conformal blocks and the Nekrasov partition functions 
for $N=2$ supersymmetric $\SU(2)$ gauge theories \cite{N:2003}.
In \cite{G:2009} degenerated versions of the conjecture were proposed.
As the most simplified case, 
it was conjectured that the inner product of a certain element 
in the Verma module of Virasoro algebra coincides 
with the Nekrasov partition function 
for the four-dimensional pure $\SU(2)$ gauge theory.

The element considered is a kind of Whittaker vector 
in the Verma module of the Virasoro algebra,
and now called Gaiotto state.
Let us recall its definition. 
Fix a non-zero complex number $\Lambda$.
Consider the completed Verma module $\widehat{M}(c,h)$ of $M(c,h)$, 
where the completion is done with respect to the $L_0$-weight 
gradation $M(c,h)=\oplus_{n\in\bbZ_{\ge0}}M(c,h)_n$.
An element $\ket{G}\in\widehat{M}(c,h)$ is called a Gaiotto state if 
\begin{align*}
L_1\ket{G}=\Lambda^2 \ket{G},\quad 
L_n\ket{G}=0\ (n>1).
\end{align*}
We normalize $\ket{G}$ by the condition 
\begin{align*}
\ket{G}=\ket{c,h}+\cdots,
\end{align*}
which means that 
the homogeneous component of $\ket{G}$ in $M(c,h)_0$ is $\ket{c,h}$.

On the other hand, the pure $\SU(r)$ gauge Nekrasov partition function 
has the next combinatorial expression 
(which can be considered as the definition of the partition function). 
Let $x,\ep_1,\ep_2,\vect{a}=(a_1,a_2,\ldots,a_r)$ be indeterminates.
\begin{align}
\label{eq:Nekrasov}
\begin{split}
Z^{\rk=r}(x;\ep_1,\ep_2,\vect{a})
\seteq
&\sum_{\vect{Y}}
\dfrac{ x^{|\vect{Y}|} }
{\prod_{1\le\alpha,\beta\le r} 
 n_{\alpha,\beta}^{\vect{Y}}(\ep_1,\ep_2,\vect{a})},
\\
n_{\alpha,\beta}^{\vect{Y}}(\ep_1,\ep_2,\vect{a})
\seteq
&\prod_{\square\in Y_\alpha}
 [-\ell_{Y_\beta}(\square)\ep_1+(a_{Y_\alpha}(\square)+1)\ep_2
 +a_\beta-a_\alpha]
\\
\times
&\prod_{\blacksquare\in Y_\beta}
 [(\ell_{Y_\alpha}(\blacksquare)+1)\ep_1- a_{Y_\beta}(\blacksquare)\ep_2
 +a_\beta-a_\alpha].
\end{split}
\end{align}
Here $\vect{Y}=(Y_1,Y_2,\ldots,Y_r)$ is an $r$-tuple of partitions, 
$|\vect{Y}|\seteq |Y_1|+|Y_2|+\cdots+|Y_r|$,
and $a_{Y}(\square)$, $\ell_{Y}(\square)$  are 
the arm and the leg of the box $\square$ with respect to $Y$ 
as will be defined in \eqref{eq:armleg}.

Now, the statement of the simplest case of Gaiotto conjectures is 
\begin{align}\label{eq:G:conj}
\langle G | G \rangle \stackrel{?}{=}Z^{\rk=2}(x;\ep_1,\ep_2,\vect{a}).
\end{align}
The parameters in both hand sides 
are related as in Table \ref{table:VirNek}.
\begin{table}[htbp]
\centering
\begin{tabular}[c]{cc}
Virasoro & Nekrasov
\\
\hline
$c$ & $13+6(\ep_1/\ep_2+\ep_2/\ep_1)$
\\
$h$ & $\big((\ep_1+\ep_2)^2-(a_2-a_1)^2\big)/4 \ep_1\ep_2$
\\
$\Lambda$ & $x^{1/4}/(\ep_1\ep_2)^{1/2}$
\end{tabular}
\vskip 1em
\caption{Parameter correspondence}
\label{table:VirNek}
\end{table}

Let us also mention the work of \cite{MMM:2009}, which shows 
\begin{align*}
\langle G | G \rangle
=\sum_{n=0}^\infty \Lambda^{4n}(K_n^{-1})_{(1^n),(1^n)},
\end{align*}
where $K_n^{-1}$ is the inverse matrix of \eqref{eq:Kac:matrix}, 
and the index `$(1^n),(1^n)$' indicates the position of the element 
of this inverse matrix 
(recall that the matrix $K_n$ is indexed by partitions of $n$).
Thus the conjecture \eqref{eq:G:conj} is equivalent to  
\begin{align}
(K_n^{-1})_{(1^n),(1^n)}(c,h)\stackrel{?}{=}
(\ep_1\ep_2)^{4n}Z_n(\ep_1,\ep_2;\vect{a}).
\label{eq:G:conj:2}
\end{align}

\subsection{Proving AGT relation via recursive formula}\label{subsec:prfAGT}

There exist several strategies for proving AGT conjectures. 
As for the pure gauge version  \eqref{eq:G:conj:2}, 
one of the strategies is to show that both sides satisfy the same 
recursive formula with respect to $n$.
This strategy was first proposed by \cite{P:2009}.

Later, the paper \cite{FL:2010} executed the procedure 
in case of the  adjoint matter theory. 
They used an integral expression of the Nekrasov partition function and 
showed that it satisfies a recursive formula. 
The same recursive formula for the Virasoro side was 
then derived from the Zamolodchikov recursive formula for conformal block 
by limiting procedure. 
 
Similar arguments were given in \cite{HJS:2010},
where the cases of $N_f=0,1,2$ (the number of matter fields) were treated. 
Here, we only recall the case $N_f=0$, that is, the pure gauge theory case.
It was shown that 
$z_n\seteq (\ep_1\ep_2)^{4n}Z_n(\ep_1,\ep_2;\vect{a})$ 
satisfies the  recursive formula:
\begin{align}\label{eq:rec:z}
z_n(t,h)=\delta_{0,n}+
\sum_{\substack{(r,s) \in \bbZ^2_{>0}, \\ 1 \le r s \le n}}
\dfrac{R_{r,s}(t)^{-1} z_{n-rs}(t,h_{r,s}(t)+r s)}{h-h_{r,s}(t)}.
\end{align}
Here the formula is written in the Virasoro parameter $(t,h)$.
In order to see it in the Nekrasov parameter, 
one needs to rewrite parameters by Table \ref{table:VirNek} 
and $c=c(t)$ defined in \eqref{eq:ct}.

On the Virasoro side, it was stated that 
$f_n(t,h)\seteq (K_n^{-1})_{(1^n),(1^n)}(c(t),h)$ 
satisfies the following recursive formula:
\begin{align}\label{eq:rec:c}
f_n(t,h)=\delta_{0,n}+
\sum_{\substack{(r,s) \in \bbZ^2_{>0}, \\ 1 \le r s \le n}}
\Big[\lim_{h\to h_{r,s}(t)} \dfrac{N_{r,s}(t,h)}{h-h_{r,s}(t)} \Big]^{-1}
\dfrac{f_{n-r s}(t,h_{r,s}(t)+r s)}{h-h_{r,s}(t)}.
\end{align}
Actually, one can obtain this formula by considering the 
Jantzen filtration (see \cite{J:1979}, \cite{FF:1990} and \cite{ES:1985}) 
on $M(c,h)$ and by some calculation on $K_n$. 
Thus from the comparison of \eqref{eq:rec:z} and \eqref{eq:rec:c},
the verification of the conjecture \eqref{eq:G:conj:2} is reduced to 
the conjecture \eqref{eq:Nrs}.

However, it seems that the justification of \eqref{eq:Nrs}
has not been discussed so far.
Note that in the argument of AGT relations, 
the formula of $R_{r,s}(t)$ \eqref{eq:Nrs} is cited without proof, 
only with the remark to the paper \cite{Z:2004}.
See \cite[(1.13a)]{FL:2010} and \cite[p. 7]{HJS:2010}.

This kind of `normalization factor' always appears in the recursive formula 
of conformal blocks and its degenerated versions.
See e.g. 
\cite[(5.40)]{ZZ:1990} (Liouville conformal block), 
\cite[(35)]{BZ:2006} (SUSY Liouville case),
\cite{HJS:2006}, \cite{HJS:2008a} and \cite{HJS:2008b} 
(also SUSY Liouville case).

Therefore, we consider that 
it is valuable to give a proof of the formula \eqref{eq:Nrs} for $R_{r,s}$.

\subsection{Main result of this paper}\label{subsect:main}

The main result is a rigorous proof of \eqref{eq:Nrs}.
Let us rephrase the statement.

\begin{thm}\label{thm:main}
Let 
\begin{align*}
P_{r,s}(t)=L_{-1}^{r s}+\cdots \in U(\Vir_{-})\otimes\bbC[t,t^{-1}]
\end{align*}
be the element which generates the singular vector 
\[\ket{\chi_{r,s}}=P_{r,s}(t)\ket{c(t),h_{r,s}(t)}\] 
in $M(c(t),h_{r,s}(t))$.
Define
\begin{align*}
 N_{r,s}(t,h)\seteq \bra{c(t),h} P_{r,s}^\dagger(t) P_{r,s}(t) \ket{c(t),h}.
\end{align*}
Then $N_{r,s}(t,h)$ has the form 
\begin{align*}
&N_{r,s}(t,h)=(h-h_{r,s}(t))\cdot R_{r,s}(t)+O((h-h_{r,s}(t))^2),\\
&R_{r,s}(t)\seteq 2 \prod_{\substack{(k,l) \in \bbZ^2, \\ 
                                    1-r \le k \le r,  \ 
                                    1-s \le l \le s,  \\
                                    (k,l)\neq(0,0),(r,s).}} 
                    (k t^{-1/2}+l t^{1/2}).
\end{align*}
\end{thm}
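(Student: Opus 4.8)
The plan is to work in the free-field (Feigin--Fuchs) realisation recalled in \S\ref{sect:boson} and to use the identification of the singular vector with a Jack symmetric polynomial (Fact~\ref{fct:sing:jack}). I first note that, by Fact~\ref{fct:fuchs}, $P_{r,s}(t)$ is a \emph{fixed} element of $U(\Vir_-)\otimes\bbC[t,t^{-1}]$ whose coefficients do not depend on $h$; hence $N_{r,s}(t,h)$ is honestly a polynomial in $h$, and $R_{r,s}(t)$ is simply its derivative at $h=h_{r,s}(t)$, the vanishing at that point being exactly the nullity of the singular vector $\ket{\chi_{r,s}}$.

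Next I would bosonise. Realise $M(c(t),h)$ inside a Heisenberg Fock space $\calF_\alpha$, with highest weight $h=h(\alpha)$ and $\ket{c(t),h}$ corresponding to $1$ in the ring of symmetric functions $\Lambda=\bbC[p_1,p_2,\dots]$ under $a_{-n}\leftrightarrow p_n$. Under this identification the Virasoro contravariant form becomes the standard Hall pairing $\langle\cdot,\cdot\rangle_0$ on $\Lambda$ twisted by the reflection $\alpha\mapsto 2\alpha_0-\alpha$ fixing $h$ (equivalently $(r,s)\mapsto(-r,-s)$, compatible with $h_{r,s}=h_{-r,-s}$ read off from \eqref{eq:hrst}); concretely there is an $\alpha$-dependent ``Shapovalov operator'' $S_\alpha$ on $\Lambda$, the operator avatar of the Kac matrix, with
\begin{align*}
N_{r,s}(t,h)=\big\langle w_\alpha,\; S_\alpha\, w_\alpha\big\rangle_0,
\end{align*}
where $w_\alpha\in\Lambda_{rs}$ is the image of $P_{r,s}(t)\ket{c(t),h}$ and $\alpha_0$ is the background charge of the realisation.

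By Fact~\ref{fct:sing:jack}, at the degenerate momentum $\alpha=\alpha_{r,s}$ (that is $h=h_{r,s}(t)$) the vector $w_{\alpha_{r,s}}$ is proportional to $J_{(s^r)}^{(t)}$ and lies in $\ker S_{\alpha_{r,s}}$, which recovers $N_{r,s}(t,h_{r,s}(t))=0$. Differentiating and using the self-adjointness of $S_\alpha$ for the Hall pairing, the terms in which $S_{\alpha_{r,s}}$ hits $w_{\alpha_{r,s}}$ drop out, so the linear coefficient collapses to
\begin{align*}
R_{r,s}(t)=\Big(\tfrac{dh}{d\alpha}\Big)^{-1}\big\langle J_{(s^r)}^{(t)},\; \big(\partial_\alpha S_\alpha\big)\big|_{\alpha_{r,s}} J_{(s^r)}^{(t)}\big\rangle_0
\end{align*}
up to the known proportionality between $w_{\alpha_{r,s}}$ and $J_{(s^r)}^{(t)}$. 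I would then evaluate the right-hand side from the explicit Heisenberg action together with the hook-product formula for the Jack norm $\langle J_{(s^r)}^{(t)},J_{(s^r)}^{(t)}\rangle_t=c_\lambda(t)c_\lambda'(t)$ (a product of arm--leg factors over the cells of the rectangle $(s^r)$) and the contribution of the second screening charge carried by $\partial_\alpha S_\alpha$. Finally I would recast the resulting arm--leg product, organised column by column, as the product over lattice points $(k,l)$ in the statement: the two special columns $k=0$ and $k=r$ each lose one cell, which I expect to match exactly the excluded points $(0,0)$ and $(r,s)$, while tracking the overall power of $t^{1/2}$ and the constant yields the leading factor $2$. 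As a consistency check the rank-one specialisation should reproduce $R_{1,s}(t)$ of \eqref{eq:Nr1}.

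The main obstacle is the middle step: establishing that the linear coefficient of the degenerating pairing is precisely $\langle J_{(s^r)}^{(t)}, (\partial_\alpha S_\alpha)J_{(s^r)}^{(t)}\rangle_0$ and evaluating it in closed form, with every normalisation constant (the power of $t^{1/2}$, the chain-rule factor $dh/d\alpha$, and the leading $2$) controlled so that the arm--leg products assemble into the symmetric range $1-r\le k\le r$, $1-s\le l\le s$ with exactly the cells $(0,0)$ and $(r,s)$ removed.
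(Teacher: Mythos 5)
Your perturbation-theoretic identity is fine as far as it goes: if $S_\alpha$ denotes the operator on $\Lambda$ representing the Shapovalov form relative to the Hall pairing, then self-adjointness plus $S_{\alpha_{r,s}}w_{\alpha_{r,s}}=0$ does collapse the derivative of $N_{r,s}$ to the single term $\langle w,(\partial_\alpha S_\alpha)w\rangle_0$, and the chain-rule factor $dh/d\alpha$ is harmless. But the proof has a genuine gap exactly where you flag ``the main obstacle'': the operator $S_\alpha$ is never constructed, and the matrix element $\langle J_{(s^r)}^{(t)},(\partial_\alpha S_\alpha)J_{(s^r)}^{(t)}\rangle_0$ is not evaluated. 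This is not a routine computation that can be waved through with the Jack norm formula. The Shapovalov form does not coincide with the Hall pairing on the Fock space --- the adjoint of $\calL_{-n}$ for the Hall pairing is the Virasoro action at the reflected momentum $2\rho-\alpha$ --- so $S_\alpha$ is effectively the reflection intertwiner $\calF_\alpha\to\calF_{2\rho-\alpha}$, an object whose off-critical $\alpha$-derivative has no clean closed form; ``the contribution of the second screening charge'' is an expectation, not an argument. A count of factors already shows the answer cannot be just the Jack norm: $\langle J_{(s^r)}^{(t)},J_{(s^r)}^{(t)}\rangle_t$ is a product of $2rs$ arm--leg factors, while $R_{r,s}(t)$ has $4rs-2$ linear factors, including ones with $k\le 0<l$ that no hook of the rectangle produces. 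Your closing step (``I expect to match exactly the excluded points $(0,0)$ and $(r,s)$'') is therefore an unverified hope, and the constant $2$ and the power of $t^{1/2}$ are likewise uncontrolled.

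For comparison, the paper never differentiates the form at all. It pins down the linear coefficient $A_{r,s}(t)$ as a Laurent polynomial in $t$ by (i) bounding $\max\deg$ and $\min\deg$ via the Astashkevich--Fuchs asymptotics of $P_{r,s}(t)$ (Fact \ref{fct:AF}) together with the explicit value of $\bra{c,h}L_s^rL_{-s}^r\ket{c,h}$, giving at most $4rs-2r-2s$ zeros; (ii) producing exactly that many zeros from the coefficient of $a_{-rs}$ in the bosonized vector --- computed from the coefficient of $p_{rs}$ in $J_{(s^r)}^{(t)}$ (Corollary \ref{cor:J:p}) and the factor $B_{r,s}(t)$ --- combined with the $t\mapsto -t$ symmetry of Feigin--Fuchs; and (iii) fixing the remaining scalar from asymptotics. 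If you want to salvage your route, you would need to supply the missing construction of $S_\alpha$ and an honest evaluation of $\partial_\alpha S_\alpha$ against the Jack vector; as written, the central identity is asserted rather than proved.
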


Let us explain the content of this paper.
\S 2 is devoted to the preliminaries on bosonization and symmetric functions,
which are crucial tools in our discussion.
In \S 3 we give the proof of the main Theorem \ref{thm:main}.
Since our argument is rather complicated, 
the outline is explained in the beginning of this section.
We end this paper with \S \ref{sec:rmk} giving some remarks on possible 
generalizations and the related works.


\section{Preliminaries on bosonization}\label{sect:boson}

\subsection{Bosonization and singular vectors}

Let us recall the Feigin-Fuchs bosonization of Virasoro algebra quickly. 
Consider the Heisenberg algebra $\calH$ generated by $a_n$ ($n\in\bbZ$) 
with the relation
\begin{align*}
[a_m,a_n]=m\delta_{m+n,0}.
\end{align*}
For a fixed $\rho\in\bbC$, consider the correspondence
\begin{align}
L_n \mapsto \calL_n
 \seteq\dfrac{1}{2}\sum_{m\in\bbZ}\no{a_m a_{n-m}}-(n+1)\rho a_n,
\quad
C \mapsto 1-12\rho^2,
\label{eq:FF}
\end{align}
where the symbol $\no{\ }$ means the normal ordering.
This correspondence determines a well-defined morphism 
\begin{align*}
\varphi: U(\Vir) \to\widehat{U}(\calH).
\end{align*}
Here $\widehat{U}(\calH)$ is 
the completion of the universal enveloping algebra $U(\calH)$ 
in the following sense 
(see also \cite{FFr:1996}, \cite{FrB:2004} and \cite{Fr:2007}). 
For $n\in\bbZ_{\ge0}$, 
let $I_n$ be the left ideal of $U(\calH)$ generated by all polynomials in $a_m$ ($m\in\bbZ_{\ge1})$ 
of degrees greater than or equal to $n$ 
(where we defined the degree by $\deg a_m\seteq m$).
Then we define 
\begin{align*}
\widehat{U}(\calH)\seteq\varprojlim_n \widehat{U}(\calH)/I_n.
\end{align*}

Next we recall the functorial correspondence of the representations. 
First let us define the Fock representation $\calF_\alpha$ of $\calH$.
$\calH$ has the triangular decomposition of 
$\calH=\calH_{+}\oplus\calH_{0}\oplus\calH_{-}$
with $\calH_{\pm}\seteq \oplus_{\pm n\in\bbZ_{\ge 1}}\bbC a_n$ and 
$\calH_{0}\seteq \bbC a_0$.
Let $\bbC_\alpha=\bbC\ket{\alpha}_\calF$ 
be the one-dimensional representation of 
$\calH_{0}\oplus\calH_{+}$ with the action 
$a_0\ket{\alpha}_\calF=\alpha \ket{\alpha}_\calF$ and 
$a_n\ket{\alpha}_\calF=0$ ($n\in\bbZ_{\ge1}$).
Then the Fock space $\calF_\alpha$ is defined to be 
\begin{align*}
\calF_\alpha\seteq
\Ind_{\calH_{0}\oplus\calH_{-}}^{\calH}\bbC_\alpha
\end{align*}
It has a weight decomposition  
\begin{align}\label{eq:F:deg}
\calF_\alpha=\oplus_{n\in\bbZ_{\ge 0}}\calF_{\alpha,n},
\end{align} 
where each weight space $\calF_{\alpha,n}$ has a basis 
\begin{align}\label{eq:a:base}
\{a_{-\lambda}\ket{\alpha}_\calF \mid \lambda\vdash n \}
\end{align} 
with $a_{-\lambda}\seteq a_{-\lambda_k}\cdots a_{-\lambda_1}$ 
for a partition $\lambda=(\lambda_1,\ldots,\lambda_k)$.
Then the action of $\widehat{U}(\calH)$ on $\calF_\alpha$ is well-defined.

Similarly the dual Fock space $\calF^*_\alpha$ is defined to be 
$\Ind_{\calH_{0}\oplus\calH_{-}}^{\calH }\bbC_\alpha^*$,
where $\bbC_\alpha^*=\bbC\cdot{}_\calF\bra{\alpha}$ 
is the one-dimensional representation of 
$\calH_{0}\oplus\calH_{-}$ with the action 
${}_\calF\bra{\alpha}a_0=\alpha \cdot {}_\calF\bra{\alpha}$ and 
${}_\calF\bra{\alpha} a_{-n}=0$ ($n\in\bbZ_{\ge1}$).

Now we can state the bosonization of representation. 
\eqref{eq:FF} is compatible with the map 
\begin{align*}
\psi: M(c,h) \to \calF_\alpha,\quad 
L_{-\lambda}\ket{c,h}\mapsto \calL_{-\lambda}\ket{\alpha}_\calF
\end{align*}
with $\calL_{-\lambda}\seteq\calL_{-\lambda_1}\calL_{-\lambda_2}\cdots\calL_{-\lambda_k}$ 
for $\lambda=(\lambda_1,\lambda_2,\ldots,\lambda_k)$
and 
\begin{align}\label{eq:FF:ch}
c=1-12\rho^2,\quad h=\dfrac{1}{2}\alpha(\alpha-2\rho).
\end{align}
In other words, we have 
\begin{align*}
\psi(x v)=\varphi(x)\psi(v)\quad (x\in \Vir,\ v\in M(c,h))
\end{align*} 
under the parametrization \eqref{eq:FF:ch} 
of highest weights.

Note that from the parametrization \eqref{eq:ct}, \eqref{eq:hrst} 
and the correspondence \eqref{eq:FF:ch}, a singular vector occurs at 
\begin{align*}
c=1-12\rho(t)^2,\quad 
h=h_{r,s}=\dfrac{1}{2}\alpha_{r,s}(t)\big(\alpha_{r,s}-2\rho(t)\big)
\end{align*} 
with
\begin{align}\label{eq:hw:rs:boson}
\rho(t)\seteq\dfrac{1}{\sqrt{2}}(t^{-1/2}-t^{1/2}),\quad
\alpha_{r,s}(t)\seteq\dfrac{1}{\sqrt{2}}\big((r+1)t^{-1/2}-(s+1) t^{1/2}\big).
\end{align}

The Fock space $\calF_\alpha$ is naturally identified 
with the space of symmetric functions. 
In this paper, the term symmetric function  means the 
infinite-variable symmetric polynomial. 
To treat such an object rigorously, 
we follow the argument of \cite[Chap.I \S 2]{M:1995}.  
Let us denote by $\Lambda_n$ the ring of $n$-variable symmetric polynomials 
over $\bbZ$, 
and by $\Lambda_n^d$ the space of homogeneous symmetric polynomials 
of degree $d$. 
The ring of symmetric functions $\Lambda$ is defined as the inverse limit 
of the $\Lambda_n$ in the category of graded rings 
(with respect to the gradation by the degree $d$). 
We denote by $\Lambda_K\seteq \Lambda\otimes_\bbZ K$ 
the coefficient extension to the ring $K$.
Among several bases of $\Lambda$, the power sum symmetric function 
\begin{align*}
p_n=p_n(x)\seteq \sum_{i\in\bbZ_{\ge1}} x_i^n,\quad
p_\lambda\seteq p_{\lambda_1}p_{\lambda_2}\cdots p_{\lambda_k},
\end{align*}
plays an important role. 
It is known that $\{p_\lambda\mid \lambda \vdash d\}$ is a basis of 
$\Lambda_\bbQ^d$, the space of homogeneous symmetric functions 
of degree $d$. 

Now following \cite{AMOS:1995}, 
we define the isomorphism between $\calF_\alpha$ and 
$\Lambda_{\bbC(t^{1/2})}$: 
\begin{align*}
\iota: \calF_\alpha \otimes \bbC(t^{1/2}) \to \Lambda_{\bbC(t^{1/2})},\quad 
v\mapsto 
{}_\calF\bra{\alpha}
\exp\Big(\dfrac{1}{\sqrt{2t}}\sum_{n=1}^\infty \dfrac{1}{n}p_n a_n\Big)v.
\end{align*}
Under this morphism, 
an element $a_{-\lambda}\ket{\alpha}_\calF$ of the base \eqref{eq:a:base} 
is mapped to 
\begin{align*}
\iota(a_{-\lambda}\ket{\alpha}_\calF)
= p_\lambda(x)/({\sqrt{2t}})^{\ell(\lambda)}.
\end{align*}
Since $\{p_\lambda\}$ is a basis of $\Lambda_\bbQ$, 
$\iota$ is actually an isomorphism.

Using the examples \eqref{eq:sing:example} and 
the Feigin-Fuchs bosonization \eqref{eq:FF}, 
one can calculate some examples of the images of singular vectors:
\begin{align*}
\iota\big(\varphi\big(P_{r,s}(t)\big)\ket{\alpha_{r,s}(t)}\big)
=\iota\circ\psi\big(P_{r,s}(t)\ket{c(t),h_{r,s}(t)}\big)
=\iota\circ\psi(\ket{\chi_{r,s}}).
\end{align*}
The result is 
\begin{align}
\label{eq:iotapsi:eg}
\begin{split}
&\iota\circ\psi(\ket{\chi_{1,1}})=J_{(1)}^{(t)}  \cdot (t^{-1}-1)      ,\\
&\iota\circ\psi(\ket{\chi_{2,1}})=J_{(1^2)}^{(t)}\cdot (t^{-1}-1)(2t^{-1}-1),\\
&\iota\circ\psi(\ket{\chi_{3,1}})
=J_{(1^3)}^{(t)}\cdot  (t^{-1}-1)(2t^{-1}-1)(3t^{-1}-1),\\
&\iota\circ\psi(\ket{\chi_{4,1}})
=J_{(1^4)}^{(t)}\cdot  (t^{-1}-1)(2t^{-1}-1)(3t^{-1}-1)(4t^{-1}-1),
\\
&\iota\circ\psi(\ket{\chi_{2,2}})
=J_{(2^2)}^{(t)}\cdot  (t^{-1}-1)(t^{-1}-2)(2t^{-1}-1)(2t^{-1}-2).
\end{split}
\end{align}
Here $J_\lambda^{(t)}$ is the integral Jack symmetric function,
the definition of which will be recalled in the next subsection.
Thus if one expresses $\ket{\chi_{r,s}}$ in terms of 
the Heisenberg generators $a_n$'s and identifies $a_{-n}$ with 
the power sum symmetric function $p_n$, 
then the expression of $\ket{\chi_{r,s}}$ is proportional 
to $J_{(s^r)}^{(t)}$.

\subsection{Jack symmetric function}
\label{subsec:jack}

Now we recall the definition and some properties of Jack symmetric function 
(see \cite[Chap.VI \S 10]{M:1995} and \cite{St:1989}). 
Let $t$ be an indeterminate 
\footnote{Our parameter $t$ is usually denoted by $\alpha$ in the literature, e.g., in \cite{M:1995}. We avoid using $\alpha$ since it is already defined to be the highest weight of the Heisenberg Fock space $\calF_\alpha$.}
and define an inner product on 
$\Lambda_{\bbQ(t)}$ by 
\begin{align*}
\langle p_\lambda,p_\mu \rangle_{t}\seteq
\delta_{\lambda,\mu}z_\lambda t^{\ell(\lambda)}.
\end{align*}
Here $z_\lambda$ is given in \eqref{eq:z_lambda}.
Then the monic Jack symmetric function $P_{\lambda}^{(t)}$ is determined 
uniquely by the following two conditions:
\begin{description}
\item[(i)] 
It has an expansion via monomial symmetric function $m_\nu$ in the form
\begin{align*}
P_{\lambda}^{(t)}
=m_\lambda+\sum_{\mu<\lambda}c_{\lambda,\mu}(t)m_\mu.
\end{align*}
Here $c_{\lambda,\mu}(t)\in\bbQ(t)$ and 
the ordering $<$ among the partitions is the dominance semi-ordering.

\item[(ii)]
The family of Jack symmetric functions is an orthogonal basis 
with respect to $\langle \cdot,\cdot \rangle_{t}$:
\begin{align*}
\langle P_\lambda^{(t)}, P_\mu^{(t)} \rangle_{t}=0 \quad 
\text{ if } \lambda\neq\mu.
\end{align*}
\end{description}

In order to define the integral Jack symmetric function $J_\lambda^{(t)}$, 
it is necessary to express the norm of $P_\lambda^{(t)}$. 
One can simply write it down using Young diagrams.
Following \cite{M:1995} we prepare several notations for diagrams here. 
To a partition $\lambda=(\lambda_1,\lambda_2,\ldots,\lambda_k)$, 
we associate the Young diagram, which is the set of boxes 
located at 
$\{(i,j)\in\bbZ^2\mid 1\le i \le k , 1\le j\le \lambda_i \}$. 
The coordinate $(i,j)$ is taken so that 
the index $i$ increases if one reads from top to bottom, 
and the index $j$ increases if one reads from left to right
(see e.g., Figure \ref{fig:442111}).
We will often identify a partition and its associated Young diagram.
\begin{figure}[htbp]
\unitlength 0.1in
\begin{center}
\begin{picture}(  8.0000,  12.0000)(  4.0000,-15.0000)
\special{pn 8}%
\special{pa 400 400}%
\special{pa 600 400}%
\special{pa 600 600}%
\special{pa 400 600}%
\special{pa 400 400}%
\special{fp}%
\special{pn 8}%
\special{pa 600 400}%
\special{pa 800 400}%
\special{pa 800 600}%
\special{pa 600 600}%
\special{pa 600 400}%
\special{fp}%
\special{pn 8}%
\special{pa 800 400}%
\special{pa 1000 400}%
\special{pa 1000 600}%
\special{pa 800 600}%
\special{pa 800 400}%
\special{fp}%
\special{pn 8}%
\special{pa 1000 400}%
\special{pa 1200 400}%
\special{pa 1200 600}%
\special{pa 1000 600}%
\special{pa 1000 400}%
\special{fp}%
\special{pn 8}%
\special{pa 400 600}%
\special{pa 600 600}%
\special{pa 600 800}%
\special{pa 400 800}%
\special{pa 400 600}%
\special{fp}%
\special{pn 8}%
\special{pa 600 600}%
\special{pa 800 600}%
\special{pa 800 800}%
\special{pa 600 800}%
\special{pa 600 600}%
\special{fp}%
\special{pn 8}%
\special{pa 800 600}%
\special{pa 1000 600}%
\special{pa 1000 800}%
\special{pa 800 800}%
\special{pa 800 600}%
\special{fp}%
\special{pn 8}%
\special{pa 1000 600}%
\special{pa 1200 600}%
\special{pa 1200 800}%
\special{pa 1000 800}%
\special{pa 1000 600}%
\special{fp}%
\special{pn 8}%
\special{pa 400 800}%
\special{pa 600 800}%
\special{pa 600 1000}%
\special{pa 400 1000}%
\special{pa 400 800}%
\special{fp}%
\special{pn 8}%
\special{pa 600 800}%
\special{pa 800 800}%
\special{pa 800 1000}%
\special{pa 600 1000}%
\special{pa 600 800}%
\special{fp}%
\special{pn 8}%
\special{pa 400 1000}%
\special{pa 600 1000}%
\special{pa 600 1200}%
\special{pa 400 1200}%
\special{pa 400 1000}%
\special{fp}%
\special{pn 8}%
\special{pa 400 1200}%
\special{pa 600 1200}%
\special{pa 600 1400}%
\special{pa 400 1400}%
\special{pa 400 1200}%
\special{fp}%
\special{pn 8}%
\special{pa 400 1400}%
\special{pa 600 1400}%
\special{pa 600 1600}%
\special{pa 400 1600}%
\special{pa 400 1400}%
\special{fp}%
\put(3.0, -3.0){\vector(1,0){4}}
\put(3.0, -3.0){\vector(0,-1){4}}
\put(7.6, -3.4){\makebox(0,0)[lb]{$j$}}%
\put(2.8, -8.5){\makebox(0,0)[lb]{$i$}}%
\end{picture}%
\end{center}
\caption{The Young diagram for $(4,4,2,1,1,1)$}
\label{fig:442111}
\end{figure}
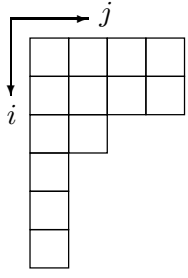

Now we define the arm and leg for a box $\square$ located at 
$(i,j)\in\bbZ^2_{\ge 1}$ with respect to $\lambda$ by 
\begin{align}
a_\lambda(\square)    \seteq \lambda_i-j,\quad 
\ell_\lambda(\square) \seteq \lambda^\vee_j-i. \label{eq:armleg}
\end{align}
Here $\lambda^\vee$ is the conjugate partition of $\lambda$, 
which is obtained by transposing the Young diagram of $\lambda$. 
(E.g., for $\lambda=(4,4,2,1,1,1)$ as in Figure \ref{fig:442111}, 
 we have $\lambda^\vee=(6,3,2,2)$.)
We also used the convention $\lambda_i=0$ for $i>\ell(\lambda)$ and 
$\lambda^\vee_j=0$ for $j>\lambda_1$. 
Thus $a_\lambda(\square)$ and $\ell_\lambda(\square)$ could be minus 
in general, 
although such a case does not occur in the norm of Jack symmetric functions.
(This generalized arm/leg is necessary for the definition 
of the Nekrasov partition function \eqref{eq:Nekrasov}.)

Using these combinatorial notations, 
one can write down the norm of monic Jack symmetric function as
\begin{align}\label{eq:jack:P:norm}
\langle P_\lambda^{(t)}, P_\lambda^{(t)} \rangle_{t} 
=\prod_{\square\in\lambda}
 \dfrac{t a_\lambda(\square)+\ell_\lambda(\square)+t}
       {t a_\lambda(\square)+\ell_\lambda(\square)+1},
\end{align}
where $\square\in\lambda$ means that the box $\square$ runs over 
the boxes in the Young diagram associated to $\lambda$.

Then the integral Jack symmetric function $J_\lambda^{(t)}$ 
is defined to be 
\begin{align}\label{eq:jack:JP}
J_\lambda^{(t)}\seteq
P_\lambda^{(t)} \cdot \prod_{\square\in\lambda}(t a_\lambda(\square)+\ell_\lambda(\square)+1)
\end{align}
It is known that for a partition $\lambda$ of $n$,
\begin{align*}
J_\lambda^{(t)}=\sum_{\mu\vdash n}u_{\lambda,\mu}(t) p_\lambda,\quad
u_{\lambda,(1^n)}(t)=1,\quad
u_{\lambda,\mu}(t)\in \bbZ[t].
\end{align*}
This is the origin of the word `integral' Jack symmetric function.

Finally, we can state the fact obtained in \cite{MY:1995}.

\begin{fct}\label{fct:sing:jack}
(1) \cite{MY:1995} The bosonization $\psi$ and the isomorphism $\iota$ 
    map the singular vector $\ket{\chi_{r,s}}$ 
    to the integral Jack symmetric function:
\begin{align*}
\iota\circ\psi(\ket{\chi_{r,s}}) \propto J_{(s^r)}^{(t)}.
\end{align*}
(2) \cite{SSAFR:2005} 
The proportional factor in the above equation is equal to 
\begin{align}\label{eq:B:rst}
B_{r,s}(t)\seteq \prod_{k=1}^r\prod_{l=1}^s (k t^{-1}-l).
\end{align}
\end{fct}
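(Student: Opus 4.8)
The plan is to reduce the theorem to a single first-derivative computation and then to evaluate that derivative in the Feigin--Fuchs bosonic picture, taking Fact~\ref{fct:sing:jack} and the Jack-norm formula \eqref{eq:jack:P:norm}--\eqref{eq:jack:JP} as the main inputs. First I write $P_{r,s}(t)=\sum_{\lambda\vdash rs}c_\lambda(t)L_{-\lambda}$, where the coefficients $c_\lambda(t)$ do not depend on $h$, so that by \eqref{eq:Kac:matrix}
\[
N_{r,s}(t,h)=\mathbf c^{\mathsf T}K_{rs}\big(c(t),h\big)\,\mathbf c,\qquad \mathbf c=(c_\lambda(t))_{\lambda\vdash rs}.
\]
At $h=h_{r,s}(t)$ the vector $P_{r,s}(t)\ket{c(t),h_{r,s}(t)}$ is the singular vector $\ket{\chi_{r,s}}$, hence lies in the radical of the contravariant form, i.e. $K_{rs}\mathbf c=0$ there. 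Thus $N_{r,s}$ vanishes to first order and
\[
R_{r,s}(t)=\frac{\partial N_{r,s}}{\partial h}\Big|_{h=h_{r,s}(t)}=\mathbf c^{\mathsf T}\,\big(\partial_h K_{rs}\big)\big|_{h=h_{r,s}(t)}\,\mathbf c .
\]

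\emph{Bosonic realisation.} I parametrise $h$ by the charge $\alpha$ through \eqref{eq:FF:ch}, so that $c=c(t)$ fixes $\rho=\rho(t)$ and $\tfrac{dh}{d\alpha}=\alpha-\rho$. Since $\varphi$ is an algebra homomorphism and the Heisenberg vacuum expectation reproduces the Virasoro contravariant form,
\[
N_{r,s}(t,h)={}_\calF\bra{\alpha}\varphi\big(P_{r,s}^\dagger(t)\big)\varphi\big(P_{r,s}(t)\big)\ket{\alpha}_\calF .
\]
Setting $F(\alpha)\seteq\iota\big(\varphi(P_{r,s}(t))\ket{\alpha}_\calF\big)\in\Lambda$, the ket image is $F(\alpha)$, while the bra image is $\widetilde F(\alpha)=\omega\,F(2\rho(t)-\alpha)$, where $\omega(p_n)=-p_n$ and the reflection $\alpha\mapsto2\rho-\alpha$ encodes the asymmetry between the coefficient of $a_{-n}$ in $\calL_{-n}$ and that of $a_n$ in $\calL_n$ caused by the $\rho$-shift in \eqref{eq:FF}. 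Under the pairing induced from the Heisenberg form, namely $\langle p_\lambda,p_\mu\rangle_\calF=\delta_{\lambda\mu}z_\lambda(2t)^{\ell(\lambda)}$ (the Jack product with parameter $2t$), one obtains $N_{r,s}(t,h)=\langle\widetilde F(\alpha),F(\alpha)\rangle_\calF$; the base case $N_{1,1}=2(h-h_{1,1}(t))$ fixes all normalisations.

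\emph{The derivative at the degenerate charge.} At $\alpha=\alpha_{r,s}(t)$ Fact~\ref{fct:sing:jack} gives $F(\alpha_{r,s}(t))=B_{r,s}(t)J_{(s^r)}^{(t)}$, and the reflected generator vanishes, $\varphi(P_{r,s}(t))\ket{2\rho(t)-\alpha_{r,s}(t)}_\calF=0$ (this is precisely the mechanism forcing $N_{r,s}(t,h_{r,s}(t))=0$, and is the counterpart of Fact~\ref{fct:sing:jack} at the conjugate charge). Differentiating $N_{r,s}=\langle\omega F(2\rho-\alpha),F(\alpha)\rangle_\calF$ in $\alpha$ and using $F(2\rho(t)-\alpha_{r,s}(t))=0$, only one term survives, so that, with $\alpha_{r,s}(t)-\rho(t)=\tfrac1{\sqrt2}(rt^{-1/2}-st^{1/2})$,
\[
R_{r,s}(t)=\frac{-B_{r,s}(t)}{\alpha_{r,s}(t)-\rho(t)}\,\Big\langle\,\omega\,\tfrac{d}{d\alpha}F\big|_{2\rho(t)-\alpha_{r,s}(t)},\ J_{(s^r)}^{(t)}\,\Big\rangle_\calF .
\]

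\emph{Evaluation and assembly; the main obstacle.} It remains to evaluate the surviving pairing in closed form. Since $F$ vanishes at $\beta_0\seteq2\rho(t)-\alpha_{r,s}(t)$, I would expand $\varphi(P_{r,s}(t))\ket{\beta}_\calF=(\beta-\beta_0)w+O((\beta-\beta_0)^2)$, read off the level-$rs$ vector $w$ from the explicit bosonic formula \eqref{eq:FF}, so that $\tfrac{d}{d\alpha}F|_{\beta_0}=\iota(w)$, and then pair $\omega\,\iota(w)$ with $J_{(s^r)}^{(t)}$ using \eqref{eq:jack:P:norm}--\eqref{eq:jack:JP}. I expect this pairing to factor as a ``reflected'' product $\overline B_{r,s}(t)$ (the analogue of \eqref{eq:B:rst} at the conjugate charge) times the integral-Jack self-pairing $\langle J_{(s^r)}^{(t)},J_{(s^r)}^{(t)}\rangle_\calF$. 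Combining $B_{r,s}(t)$, $\overline B_{r,s}(t)$, this self-pairing and the Jacobian $(\alpha_{r,s}(t)-\rho(t))^{-1}$, and simplifying the resulting products with the factorisation of the differences $h_{r,s}(t)-h_{r',s'}(t)$ into linear forms in $t^{\pm1/2}$ coming from \eqref{eq:hrst}, should collapse everything to $2\prod_{(k,l)}(kt^{-1/2}+lt^{1/2})$. The hard part is exactly this closed-form evaluation of the first-order bosonic datum at the reflected charge, i.e. controlling how the bosonised singular-vector generator leaves the degenerate locus; the cases $(r,s)=(1,1)$ and $(2,1)$ confirm both the vanishing $\varphi(P_{r,s}(t))\ket{\beta_0}_\calF=0$ and the final normalisation.
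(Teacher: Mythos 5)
Your proposal does not prove the statement it was assigned. Fact~\ref{fct:sing:jack} \emph{is} the assertion that $\iota\circ\psi(\ket{\chi_{r,s}})\propto J_{(s^r)}^{(t)}$ together with the value \eqref{eq:B:rst} of the proportionality constant; your very first sentence takes ``Fact~\ref{fct:sing:jack} \dots as the main inputs,'' and you invoke both halves explicitly at the degenerate charge when you set $F(\alpha_{r,s}(t))=B_{r,s}(t)J_{(s^r)}^{(t)}$. What you then sketch is an argument for Theorem~\ref{thm:main}, the norm formula for the logarithmic primary. Relative to the assigned statement this is circular: nothing in your text establishes the proportionality of part (1), and nothing computes the constant of part (2). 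A genuine proof of (1) must identify the bosonized singular vector with the Jack function --- e.g.\ by comparing the screening-operator integral representation of $\ket{\chi_{r,s}}$ in the Feigin--Fuchs module with the integral formula for Jack symmetric functions, which is the route of \cite{MY:1995}, or by verifying that the image is a triangular eigenvector of the Calogero--Sutherland operator --- and a proof of (2) must then pin down the scalar, for instance by matching a single distinguished coefficient on both sides (say that of $p_1^{rs}$, where $u_{\lambda,(1^n)}(t)=1$ and $P_{r,s}(t)=L_{-1}^{rs}+\cdots$), as in \cite{SSAFR:2005}. Note for calibration that the paper supplies no internal proof to compare against: the statement is quoted as a Fact with citations to \cite{MY:1995} and \cite{SSAFR:2005}, so a blind attempt would have to build the above from scratch.

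Even read charitably as an attempt on Theorem~\ref{thm:main}, your sketch is incomplete at precisely its load-bearing step: the closed-form evaluation of the pairing of $\omega\,\tfrac{d}{d\alpha}F$ at the reflected charge against $J_{(s^r)}^{(t)}$ is left at the level of ``I expect this pairing to factor,'' with no derivation of the claimed reflected factor or of the self-pairing. The paper's actual proof of the main theorem deliberately avoids this computation: it bounds the $t$-degrees of the first-order coefficient via the Astashkevich--Fuchs asymptotics (Fact~\ref{fct:AF}, Lemma~\ref{lem:maxmin}), extracts $2rs-r-s$ zeros from the coefficient of $a_{-rs}$ in $g_0(t)$ (Lemmas~\ref{lem:IMM} and \ref{lem:a-rs}, which is the only place Fact~\ref{fct:sing:jack} enters), doubles them using the $t\mapsto -t$ symmetry (Fact~\ref{fct:t-t}), and fixes the residual scalar by asymptotics. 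Your reflection identity $\alpha_{-r,-s}(t)=2\rho(t)-\alpha_{r,s}(t)$ and the vanishing of $\varphi(P_{r,s}(t))\ket{\alpha_{-r,-s}(t)}_\calF$ are correct and consistent with Lemma~\ref{lem:Pg}, but they are ingredients of the main theorem's proof, not of Fact~\ref{fct:sing:jack}.
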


\section{The proof of Theorem \ref{thm:main}}

In this section, we will show our main theorem 
following the strategy of \cite{IMM:1992}.
Let us set 
\begin{align}\label{eq:Ars}
A_{r,s}(t)\seteq \lim_{h\to h_{r,s}(t)} R_{r,s}(t)/(h-h_{r,s}(t)).
\end{align} 

Our proof consists of the following steps.
\begin{description}
\item[Step 1]
Estimate the degree of $A_{r,s}(t)$ as the Laurent polynomial of $t$.
This step is executed with the help of asymptotic behavior of $P_{r,s}(t)$ 
studied in \cite{AF:1997}.
We show these arguments in \S \ref{subsec:step1}.

\item[Step 2]
Determine the set $S$ of the zeros of $A_{r,s}(t)$. 
This step is done in \S \ref{subsec:step2}, 
and it is divided into three sub-steps:
\begin{itemize}
\item Upper bound of $\# S$.
 The degree estimate in Step 1 gives the upper limit of 
 the number of zeros.

\item Lower bound of $\# S$.
 Next we show that $S$ includes a certain subset $S'$ using bosonization. 
 $S'$ is determined from a coefficient of Jack symmetric functions 
 appearing in the bosonization of singular vector. 
 Then we also show that $S$ is invariant under the action $t\mapsto -t$. 
 Thus $S$ contains $S'\cup -S'$, where $-S'\seteq\{-t\mid t\in S'\}$.

\item Third sub-step. 
 Since $\#(S'\cup -S')$ is equal to the upper limit, 
 $S$ should be equal to $S'\cup -S'$.
\end{itemize}

As the result of Step 1 and Step 2, 
$A_{r,s}(t)$ is determined up to a numerical factor.

\item[Step 3] Determine the numerical factor of $A_{r,s}(t)$.
We can determine this factor from the asymptotic behavior of $P_{r,s}(t)$ 
with respect to $t$. This step is done by direct calculation so that 
we omit the detail.
\end{description}

\subsection{Step 1. Degree estimate}\label{subsec:step1}

Let us recall the next fact:

\begin{fct}[\cite{AF:1997}]\label{fct:AF}
If one expands $P_{r,s}(t)$ as a Laurent polynomial of $t$, then
\begin{align*}
P_{r,s}(t)
=[(r-1)!]^{2s}L_{-r}^s t^{-(r-1)s}+\cdots+[(s-1)!]^{2r} L_{-s}^r t^{r(s-1)},
\end{align*}
where `$\cdots$' denotes the intermediate degrees in $t$.
\end{fct}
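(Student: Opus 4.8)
The two extreme-power assertions are interchanged by the symmetry \eqref{eq:t-t^-1}: since $P_{r,s}(t)=P_{s,r}(t^{-1})$, the lowest-degree statement for $(r,s)$ is exactly the highest-degree statement for $(s,r)$ read in $t^{-1}$, and conversely. It therefore suffices to prove one of them, say that the highest power of $t$ appearing in $P_{r,s}(t)$ is $t^{r(s-1)}$, that it is carried only by the rectangular monomial $L_{-s}^r$ (partition $(s^r)$), and that its coefficient is $[(s-1)!]^{2r}$, the sign being read off from the explicit coefficients. The main engine is Fact \ref{fct:sing:jack}, which converts this into a question about symmetric functions through the identity $\iota\circ\psi(\ket{\chi_{r,s}})=B_{r,s}(t)\,J_{(s^r)}^{(t)}$, with $B_{r,s}(t)$ as in \eqref{eq:B:rst}.

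I would first settle the base case $r=1$ with the explicit Benoit--Saint-Aubin formula for $P_{1,s}(t)$ from \cite{BS:1988}. There $P_{1,s}(t)$ is a sum over compositions $(k_1,\dots,k_n)$ of $s$, and the monomial $L_{-k_1}\cdots L_{-k_n}$ enters with a coefficient that is a constant multiple of $t^{s-n}$; after rewriting in the standard basis $\{L_{-\lambda}\}$ by the Virasoro relations, the coefficient of $L_{-\lambda}$ becomes a polynomial in $t$ of degree at most $s-\ell(\lambda)$. Hence the top power $t^{s-1}$ is attained uniquely by $L_{-s}$ (the only one-part composition) with coefficient $[(s-1)!]^{2}$, and the bottom power $t^{0}$ uniquely by $L_{-1}^s$ with coefficient $1$. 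This matches the claimed exponents $t^{r(s-1)}$, $t^{-(r-1)s}$ and constants $[(s-1)!]^{2}$, $[(r-1)!]^{2s}=1$ at $r=1$, and can be cross-checked against \eqref{eq:sing:example}.

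For general $r$ I would run a degree count on $\iota\circ\psi(\ket{\chi_{r,s}})=B_{r,s}(t)J_{(s^r)}^{(t)}$. Writing $P_{r,s}(t)=\sum_{\lambda\vdash rs}c_\lambda(t)L_{-\lambda}$, the images $\iota\circ\psi(L_{-\lambda}\ket{c,h})$ give a change of basis to the power sums whose $t$-behavior is governed by the bosonized generators \eqref{eq:FF}, $\calL_{-n}=\tfrac12\sum_m\no{a_m a_{-n-m}}+(n-1)\rho\,a_{-n}$, together with $\rho(t)$ and $\alpha_{r,s}(t)$ from \eqref{eq:hw:rs:boson}, which inject half-integer powers of $t$. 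The plan is to match the top $t$-power on the two sides: on the right one combines the extreme $t$-degree of $B_{r,s}(t)$ (whose constant term is $(-1)^{rs}(s!)^r$) with the leading $t$-behavior of $J_{(s^r)}^{(t)}$, extracted from the integral-Jack normalization \eqref{eq:jack:JP} and the norm \eqref{eq:jack:P:norm}; on the left one inverts the transition matrix, tracking how the $(\sqrt{2t})^{-\ell(\mu)}$ factors in $\iota$ and the $\rho$-linear parts of $\calL_{-n}$ shift degrees. I expect this to isolate $c_{(s^r)}(t)$ with top degree $t^{r(s-1)}$ and leading constant $[(s-1)!]^{2r}$; the $r$-th power structure should reflect the $r$-fold screening (Dotsenko--Fateev) realization underlying $J_{(s^r)}^{(t)}$, which in the extreme-$t$ limit decouples into $r$ independent copies of the $r=1$ computation. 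Alternatively the step-by-step construction of $P_{r,s}$ from $P_{1,s}$ in \cite{BFIZ:1991} should propagate the extreme term multiplicatively.

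The hard part will be this last step: controlling the bookkeeping of the half-integer powers of $t$ coming from $\rho(t)$, $\alpha_{r,s}(t)$ and the normalization $(\sqrt{2t})^{-\ell(\mu)}$, and proving that $(s^r)$ is the \emph{unique} $\lambda\vdash rs$ whose coefficient reaches degree $t^{r(s-1)}$, all other monomials remaining strictly below. I expect both the uniqueness and the exact factorial constant to require either a precise determination of the leading coefficient of $J_{(s^r)}^{(t)}$ in the relevant power sums, or an induction that transports the clean $r=1$ statement through the screening or BFIZ construction while verifying that no intermediate monomial overtakes $(s^r)$ in $t$-degree.
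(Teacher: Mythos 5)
The paper offers no proof of this statement at all: it is imported as a Fact from Astashkevich--Fuchs \cite{AF:1997}, who obtain it by a direct asymptotic analysis of the singular vector inside the Verma module (expanding the equations $L_1 v = L_2 v = 0$ at the extreme powers of $t$), with no symmetric functions involved. So your proposal is by construction a different route, and the relevant question is whether it closes. It does not, by your own admission. The reduction of the minimal-degree claim to the maximal-degree claim via $P_{r,s}(t)=P_{s,r}(t^{-1})$ is correct (it is the same trick the paper uses in Lemma \ref{lem:maxmin}), and your base case $r=1$ via the Benoit--Saint-Aubin formula is sound: a composition with $n$ parts carries $t^{s-n}$, straightening into the ordered basis only decreases the number of factors, so the coefficient of $L_{-\lambda}$ has degree at most $s-\ell(\lambda)$, and the extreme terms are attained uniquely by $L_{-s}$ and $L_{-1}^s$. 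But for general $r$ everything decisive is deferred: you ``expect'' the inversion of the transition matrix to isolate $c_{(s^r)}(t)$ with top degree $t^{r(s-1)}$, and you yourself flag precisely this as the hard part. That expectation \emph{is} the content of the Fact, so what you have is a program, not a proof.

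Two concrete obstructions would have to be overcome to complete it. First, a degree bound on the image $B_{r,s}(t)\,J_{(s^r)}^{(t)}$ does not transfer to degree bounds on the individual coefficients $c_\lambda(t)$ of $P_{r,s}(t)=\sum_\lambda c_\lambda(t)L_{-\lambda}$ unless you prove that the leading $t\to\infty$ forms of the vectors $\calL_{-\lambda}\ket{\alpha_{r,s}(t)}_\calF$ are linearly independent: since $\rho(t)\sim -t^{1/2}/\sqrt{2}$ and $\alpha_{r,s}(t)\sim -(s+1)t^{1/2}/\sqrt{2}$ grow at the same rate, every $\calL_{-\lambda}\ket{\alpha_{r,s}(t)}_\calF$ scales like $t^{\ell(\lambda)/2}$ times a leading vector, and a priori cancellations among these top-order terms could let some $c_\lambda(t)$ exceed the expected degree. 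Ruling this out is essentially the asymptotic analysis of \cite{AF:1997} transplanted to the Fock side, i.e.\ the original difficulty in new clothes. Second, you invert the transition matrix at the degenerate weight $\alpha=\alpha_{r,s}(t)$; this is legitimate over $\bbC(t^{1/2})$ because the level-$rs$ determinant vanishes only at the reflected weights $\alpha_{-r',-s'}(t)$ (compare Lemma \ref{lem:Pg}, where the overall factor $\ve_{r,s}^{\dagger}$ records exactly such a zero) and $\alpha_{r,s}(t)\neq\alpha_{-r',-s'}(t)$ identically in $t$, but your sketch never verifies this, and at a singular specialization the inversion step would simply fail. A final nicety: the Fact as printed suppresses signs --- from \eqref{eq:sing:example}, $P_{1,2}(t)=L_{-1}^2-tL_{-2}$ has top coefficient $-t$, and your base case actually yields $(-1)^{s-1}[(s-1)!]^2$ --- which is harmless here, since the paper only ever uses the two extreme degrees (in Lemma \ref{lem:maxmin} and Corollary \ref{cor:zero:1}), not the constants.
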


Now, for a Laurent series $a(t)=\sum_{k} a_k t^k$ of $t$, 
we define the maximum and minimum degrees of $a(t)$ by
\begin{align}\label{eq:def:maxmin}
\max\deg a(t)\seteq\max\{k\mid a_k\neq 0\},\quad
\min\deg a(t)\seteq\min\{k\mid a_k\neq 0\}.
\end{align}

\begin{lem}\label{lem:maxmin}
The maximum and minimum degrees of $A_{r,s}(t)$ are estimated as 
\begin{align*}
\max\deg A_{r,s}(t)\le 2r(s-1),\quad
\min\deg A_{r,s}(t)\ge-2(r-1)s.
\end{align*}
\end{lem}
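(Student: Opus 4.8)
The plan is to rewrite $A_{r,s}(t)$ as a derivative and then cut the work in half by symmetry. Since the singular vector has vanishing norm, $N_{r,s}(t,h_{r,s}(t))=0$, so \eqref{eq:Ars} reads $A_{r,s}(t)=\partial_h N_{r,s}(t,h)\big|_{h=h_{r,s}(t)}$. Expanding $P_{r,s}(t)=\sum_{\lambda\vdash rs}g_\lambda(t)L_{-\lambda}$ with $g_\lambda(t)\in\bbC[t,t^{-1}]$ gives $N_{r,s}(t,h)=\sum_{\lambda,\mu\vdash rs}g_\lambda(t)g_\mu(t)K_{\lambda,\mu}(c(t),h)$. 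For the reduction, recall from \eqref{eq:t-t^-1} and \eqref{eq:symmetry:c} that $P_{r,s}(t)=P_{s,r}(t^{-1})$, $c(t)=c(t^{-1})$ and $h_{r,s}(t)=h_{s,r}(t^{-1})$; since $\dagger$ acts only on the Virasoro generators and fixes the coefficients in $t$, these identities yield $N_{r,s}(t,h)=N_{s,r}(t^{-1},h)$ and hence $A_{r,s}(t)=A_{s,r}(t^{-1})$. Consequently $\min\deg A_{r,s}(t)=-\max\deg A_{s,r}(t)$, so it suffices to establish the single bound $\max\deg A_{r,s}(t)\le 2r(s-1)$ for all $r,s$; applying it to the pair $(s,r)$ then gives $\min\deg A_{r,s}(t)\ge -2s(r-1)=-2(r-1)s$.

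To bound $\max\deg A_{r,s}$ I would track $t$-degrees in the expansion above, using three structural inputs. First, Fact \ref{fct:AF} controls the top $t$-degree of $P_{r,s}(t)$: it equals $r(s-1)$, attained only by $\lambda=(s^r)$ through $L_{-s}^r$, which forces $\max\deg_t g_\lambda(t)\le r(s-1)$ and, more finely, makes the degree of $g_\lambda$ correlate with the shape of $\lambda$. Second, $K_{\lambda,\mu}(c,h)$ is a polynomial in $(c,h)$ of weighted degree $\le rs$ (with $\deg c=2$, $\deg h=1$), with $h$-degree $\min(\ell(\lambda),\ell(\mu))$ and with the $c$-degree of the coefficient of each power of $h$ bounded in terms of $\ell(\lambda)+\ell(\mu)$; both facts follow by induction from \eqref{eq:Vir}. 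Substituting $c=c(t)$ and $h=h_{r,s}(t)$, each of degree $1$ in $t$ by \eqref{eq:ct} and \eqref{eq:hrst}, and then applying $\partial_h$, removes the $h$-free part of every entry. The leading contribution comes from $\lambda=\mu=(s^r)$ and equals $[(s-1)!]^{4r}\,t^{2r(s-1)}\,\partial_h K_{(s^r),(s^r)}(c(t),h_{r,s}(t))$; one computes that $\partial_h K_{(s^r),(s^r)}(c(t),h_{r,s}(t))$ is a nonzero constant, so this term contributes degree exactly $2r(s-1)$.

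The crux, and the main obstacle, is to show that no other term exceeds this. The naive degree count overshoots $2r(s-1)$ by an amount of order $rs$, and the discrepancy is recovered only through cancellations that take place precisely at the singular locus $h=h_{r,s}(t)$: for instance, for $r=s=2$ one has $\partial_h K_{(2^2),(2^2)}(c,h)=64h+8c+32$, whose degree-$1$ part cancels after the substitution $c=c(t)$, $h=h_{r,s}(t)$ because the leading coefficients of $h_{r,s}(t)$ and $c(t)$ stand in exactly the required ratio, dropping the degree from $1$ to $0$. Proving that such cancellations always occur, equivalently bounding $\deg_t \partial_h K_{\lambda,\mu}(c(t),h_{r,s}(t))$ sharply for every pair $(\lambda,\mu)$ in tandem with the fine, $\lambda$-dependent estimates on $\max\deg_t g_\lambda(t)$, is where the detailed asymptotic expansion of $P_{r,s}(t)$ from \cite{AF:1997} (beyond the two extreme terms recorded in Fact \ref{fct:AF}) is needed, and is the technical heart of this step.
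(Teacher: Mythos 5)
Your setup coincides with the paper's: you reduce the minimum-degree bound to the maximum-degree bound via $P_{r,s}(t)=P_{s,r}(t^{-1})$ (hence $A_{r,s}(t)=A_{s,r}(t^{-1})$), you isolate the contribution of $\lambda=\mu=(s^r)$ using Fact \ref{fct:AF}, and you observe that the $t$-degree of $\partial_h K_{(s^r),(s^r)}(c(t),h)\big|_{h=h_{r,s}(t)}$ drops to $0$ because the top powers of $t$ in $h_{r,s}(t)$ and $c(t)$ cancel inside each factor $2h+s(k-1)+\tfrac{s^2-1}{12}c$; this is exactly the content of the paper's Lemma \ref{lem:shap}, which gives $\max\deg e_1^{(r,s)}(t)=0$. (A minor inaccuracy: that quantity is not a constant in general --- by Lemma \ref{lem:shap} it has $\min\deg=1-r$ when $r\neq s$ --- but only $\max\deg\le 0$ is needed.)

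The gap is that you stop exactly where the lemma has to be proved: you declare that showing ``no other term exceeds this'' is the crux, and then describe, without carrying out, a programme of sharp bounds on $\deg_t\partial_h K_{\lambda,\mu}(c(t),h)\big|_{h=h_{r,s}(t)}$ in tandem with shape-dependent bounds on $\max\deg g_\lambda(t)$. As submitted, the proposal only shows that one summand of $A_{r,s}(t)$ has degree $2r(s-1)$; the asserted inequality is a statement about all summands, and your own computation (the naive count overshooting by order $rs$) shows the remaining terms cannot be dismissed by degree bookkeeping alone. For comparison, the paper disposes of this with the single line ``By Fact \ref{fct:AF}, $\max\deg A_{r,s}(t)\le\max\deg\big(e_1^{(r,s)}(t)\,t^{2r(s-1)}\big)$,'' reading the clause that the dots in Fact \ref{fct:AF} denote ``the intermediate degrees'' as saying every coefficient other than that of $L_{-s}^r$ has strictly smaller $t$-degree, and taking the domination of the cross terms from there; it then needs Lemma \ref{lem:shap} only for the extremal entry. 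Your worry about positive powers of $t$ entering through $c(t)$ in the other Shapovalov entries is a legitimate one about that step, but the paper asserts and uses the reduction, whereas your proposal leaves it as an open obstacle. To have a proof you must either supply the uniform estimate you sketch or justify the paper's one-line reduction.
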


\begin{proof}
First we treat $\max\deg A_{r,s}(t)$. 
Expand $\bra{c(t),h}L_{s}^{r} L_{-s}^{r} \ket{c(t),h}$ with respect 
to $h$ around $h_{r,s}(t)$ as 
\begin{align*}
\bra{c(t),h}L_{s}^{r} L_{-s}^{r}\ket{c(t),h}
=\sum_{k=0}^{r s-1} e_{k}^{(r,s)}(t) (h-h_{r,s}(t))^k.
\end{align*} 
By Fact \ref{fct:AF}, we have
\begin{align*}
\max\deg A_{r,s}(t)\le \max\deg \big( e_{1}^{(r,s)}(t) t^{2r(s-1)} \big).
\end{align*}
Since $\max\deg e_{1}^{(r,s)}(t)=0$ by Lemma \ref{lem:shap},
we have the result.

The estimate for  $\min\deg A_{r,s}(t)$ is obtained from that of 
$\max\deg A_{r,s}(t)$ 
by using the symmetry $t\mapsto t^{-1}$, $(r,s)\mapsto (s,r)$ \eqref{eq:t-t^-1}.
\end{proof}

\begin{cor}\label{cor:zero:1}
The number of zeros of $A_{r,s}(t)$ is at most $4r s-2r-2s$.
\end{cor}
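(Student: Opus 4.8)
The plan is to reduce the counting of zeros to the spread between the maximum and minimum degrees of $A_{r,s}(t)$, both of which have already been controlled in Lemma \ref{lem:maxmin}. The key elementary observation is that a nonzero Laurent polynomial is, up to a monomial factor, an ordinary polynomial whose degree equals precisely this spread.

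Concretely, I would first recall that any nonzero Laurent polynomial $a(t)=\sum_{k} a_k t^k$ factors as $a(t)=t^{m}\,\tilde{a}(t)$, where $m=\min\deg a(t)$ and $\tilde{a}(t)$ is an ordinary polynomial with $\tilde{a}(0)\neq 0$ and $\deg\tilde{a}(t)=\max\deg a(t)-\min\deg a(t)$. Over $\bbC$ the zeros of $a(t)$ with $t\neq 0$ coincide with the zeros of $\tilde{a}(t)$, so their number, counted with multiplicity, is at most $\max\deg a(t)-\min\deg a(t)$. In particular the number of distinct zeros is bounded by the same quantity.

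Applying this to $a(t)=A_{r,s}(t)$ and invoking the two estimates of Lemma \ref{lem:maxmin}, the number of zeros is bounded by
\[
\max\deg A_{r,s}(t)-\min\deg A_{r,s}(t)
\le 2r(s-1)-\big(-2(r-1)s\big)=2r(s-1)+2(r-1)s.
\]
Expanding the right-hand side gives $2rs-2r+2rs-2s=4rs-2r-2s$, which is exactly the claimed bound.

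There is essentially no serious obstacle here: the corollary is an immediate arithmetic consequence of the degree estimates of Lemma \ref{lem:maxmin}, and the only point deserving a word of care is the meaning of \emph{number of zeros}. Since $A_{r,s}(t)$ is a Laurent polynomial, the point $t=0$ is excluded (it contributes only the monomial factor $t^{m}$), so the count is understood over $t\in\bbC$ with $t\neq 0$; under this convention the displayed inequality is precisely the count of roots of the associated ordinary polynomial $\tilde{a}(t)$, and the bound holds whether one counts with or without multiplicity.
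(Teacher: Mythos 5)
Your proposal is correct and is exactly the argument the paper intends: the corollary is stated as an immediate consequence of Lemma \ref{lem:maxmin}, since a nonzero Laurent polynomial in $\bbC[t,t^{-1}]$ has at most $\max\deg-\min\deg$ zeros in $\bbC\setminus\{0\}$ (counted with multiplicity), and $2r(s-1)-(-2(r-1)s)=4rs-2r-2s$. Your remark about excluding $t=0$ is a reasonable clarification and is consistent with how the zeros are used later, where they all lie at nonzero rationals $\pm k/l$.
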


\subsection{Step 2. Zero counting}\label{subsec:step2}

Recall the bosonization morphism $\varphi$ \eqref{eq:FF}, 
the correspondence \eqref{eq:FF:ch} 
and the highest weight \eqref{eq:hw:rs:boson} of Heisenberg algebra 
at which a singular vector exists.
We will use the highest weight shift of Heisenberg algebra:
\begin{align*}
\ve_{r,s}(t,\alpha)        \seteq \alpha-\alpha_{r,s}(t),\quad
\ve_{r,s}^\dagger(t,\alpha)\seteq \alpha-\alpha_{-r,-s}(t).
\end{align*}
Note that we have
\begin{align}\label{eq:dee}
h-h_{r,s}(t)=\dfrac{1}{2}\ve_{r,s}(t,\alpha)\ve_{r,s}^\dagger(t,\alpha).
\end{align}
under the correspondence \eqref{eq:FF:ch}.

For a partition $\lambda$ of $n$, 
define the degree of $a_{-\lambda}\in \bbC[a_{-1},a_{-2},\ldots,a_{-m}]$ 
by $\deg a_{-\lambda} \seteq |\lambda| =n$,
and denote by $\bbC[a_{-1},a_{-2},\ldots,a_{-m}]_n$ 
the subspace of homogeneous elements of degree $n$.
Then $\calF_{\alpha,n}$ defined in \eqref{eq:F:deg} is isomorphic 
to $\bbC[a_{-1},a_{-2},\ldots,a_{-n}]_n \ket{\alpha}_\calF$.
Similarly, we define the degree of $a_\lambda$ 
by $\deg a_\lambda\seteq |\lambda|$ and 
denote the homogeneous subspace by $\bbC[a_{1},a_{2},\ldots,a_{m}]_n$.

\begin{lem}\label{lem:Pg}
(1)
We have
\begin{align*}
\varphi(P_{r,s}(c(t),h)) \ket{\alpha}_\calF
=\ve_{r,s}^{\dagger}(t,\alpha)\Big[g_0(t)
 +\sum_{k=1}^{r s-1}(\ve_{r,s}(t,\alpha))^k g_k(t)\Big]
 \ket{\alpha}_\calF,
\end{align*}
with 
\begin{align*}
&g_0(t)\in\bbC[t^{\pm1/2}]\otimes\bbC[a_{-1},a_{-2},\ldots,a_{-r s}]_{r s},
\\ 
&g_k(t)\in\bbC[t^{\pm1/2}]\otimes\bbC[a_{-1},a_{-2},\ldots,a_{-r s+1}]_{r s} 
\quad (k\neq 0).
\end{align*}

(2)
We also have 
\begin{align*}
{}_\calF\bra{\alpha} \varphi(P_{r,s}^\dagger(c(t),h)) 
={}_\calF\bra{\alpha}
 \ve_{r,s}(\alpha,t)\Big[g_0^\dagger(t)
 +\sum_{k=1}^{r s-1} (\ve_{r,s}^\dagger (t,\alpha))^k g_k^\dagger(t)\Big],
\end{align*}
with 
\begin{align*}
&g_0^\dagger(t)
\in\bbC[t^{\pm1/2}]\otimes\bbC[a_1,a_{2},\ldots,a_{r s}]_{r s},
\\
&g_k^\dagger(t)
\in\bbC[t^{\pm1/2}]\otimes\bbC[a_1,a_{2},\ldots,a_{r s-1}]_{r s}
\quad (k\neq0).
\end{align*}
\end{lem}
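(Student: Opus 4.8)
The plan is to use the Feigin--Fuchs bosonization to track how the factor $h-h_{r,s}(t)=\tfrac12\ve_{r,s}(t,\alpha)\ve_{r,s}^\dagger(t,\alpha)$ (cf. \eqref{eq:dee}) enters the bosonized singular operator. The key observation is that $\varphi(P_{r,s}(t))\ket{\alpha}_\calF$ becomes a genuine \emph{singular vector} precisely at $\alpha=\alpha_{r,s}(t)$, i.e. when $\ve_{r,s}(t,\alpha)=0$. Since being annihilated by $\calL_n$ ($n>0$) forces the vanishing of the operator at that value of $\alpha$, I would first argue that $\varphi(P_{r,s}(t))\ket{\alpha}_\calF$ vanishes at $\alpha=\alpha_{r,s}(t)$, hence is divisible by $\ve_{r,s}^\dagger(t,\alpha)$. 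The reason one factors out $\ve_{r,s}^\dagger$ rather than $\ve_{r,s}$ is the asymmetry of the bosonization: the singular vector condition involves the \emph{dual} highest weight $\alpha_{-r,-s}$, which is exactly the root of $\ve_{r,s}^\dagger$.

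First I would expand $\varphi(P_{r,s}(t))\ket{\alpha}_\calF$ as a polynomial in the shift $\ve_{r,s}(t,\alpha)$, writing it as $\sum_{k\ge0}(\ve_{r,s})^k\,\tilde g_k(t)\ket{\alpha}_\calF$ with each $\tilde g_k(t)$ an element of $\bbC[t^{\pm1/2}]\otimes\bbC[a_{-1},\ldots,a_{-rs}]_{rs}$; this is legitimate because $\alpha$ enters the bosonized generators \eqref{eq:FF} only through $a_0$, so $\varphi(P_{r,s})$ acting on $\ket{\alpha}_\calF$ is polynomial in $\alpha$, and by the degree bound from Fact \ref{fct:fuchs} (coefficients are Laurent polynomials in $t$ and the operator has total degree $rs$) the expansion terminates at $k=rs-1$. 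Then I would invoke divisibility by $\ve_{r,s}^\dagger$: since $\ve_{r,s}^\dagger-\ve_{r,s}=\alpha_{r,s}-\alpha_{-r,-s}$ is a fixed element of $\bbC[t^{\pm1/2}]$ independent of $\alpha$, writing $\ve_{r,s}=\ve_{r,s}^\dagger-(\alpha_{r,s}-\alpha_{-r,-s})$ and substituting into the expansion lets me reorganize the whole sum as $\ve_{r,s}^\dagger(t,\alpha)\big[g_0(t)+\sum_{k=1}^{rs-1}(\ve_{r,s})^k g_k(t)\big]$, where the vanishing of the constant ($\ve_{r,s}^\dagger$-free) term is guaranteed by the singular-vector property.

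The finer claim about the degree structure of the $a_{-n}$'s requires separating the role of $\calL_{-rs}$. The leading term $L_{-1}^{rs}$ of $P_{r,s}(t)$ bosonizes to an operator whose top power-sum content lands in $\bbC[a_{-1},\ldots,a_{-rs}]_{rs}$, and the single generator $a_{-rs}$ can only arise from the normal-ordered quadratic piece of one $\calL_{-rs}$, which carries an explicit factor of $\alpha$ (through the $-(n+1)\rho a_n$ term and the $a_0 a_{-rs}$ contraction). Concretely, the coefficient of $a_{-rs}$ is linear in $\alpha$, hence at $\alpha=\alpha_{r,s}(t)$ it contributes to $g_0(t)$ but its derivative in $\alpha$ (which feeds into the $g_k$ with $k\ge1$) must kill the $a_{-rs}$ mode except through the $\ve_{r,s}^\dagger$ prefactor — so the $g_k$ for $k\neq0$ live in the smaller space $\bbC[a_{-1},\ldots,a_{-rs+1}]_{rs}$. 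Tracking this single-mode bookkeeping carefully is the part I would write out, as it is exactly what distinguishes $g_0$ from the higher $g_k$.

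Part (2) then follows formally by applying the anti-homomorphism $\dagger$, which sends $L_{-n}\mapsto L_n$ and hence $a_{-n}\mapsto a_n$, and which interchanges the roles of $\ve_{r,s}$ and $\ve_{r,s}^\dagger$ (since $\dagger$ conjugates the highest-weight/dual-highest-weight structure); applying $\varphi\circ\dagger$ to the expression in (1) and reading it from the dual Fock space ${}_\calF\bra{\alpha}$ gives the stated form directly. The main obstacle I anticipate is not the factorization by $\ve_{r,s}^\dagger$, which is a clean consequence of the singular-vector condition, but rather the precise degree-tracking of the $a_{-rs}$ mode needed to confirm that the $k\neq0$ coefficients drop one generator; this demands a careful look at how $\alpha$ enters the bosonized $\calL_{-rs}$ and which contractions in the normal-ordering can produce the highest mode.
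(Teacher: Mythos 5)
Your overall skeleton matches the paper's: expand $\varphi(P_{r,s})\ket{\alpha}_\calF$ as a polynomial of degree $rs$ in $\alpha$, factor out one linear factor, and use linearity in $\alpha$ of the coefficient of $a_{-rs}$ to confine that generator to $g_0$. However, your justification of the crucial first step --- divisibility by $\ve_{r,s}^\dagger(t,\alpha)=\alpha-\alpha_{-r,-s}(t)$ --- is wrong as stated. You claim that at $\alpha=\alpha_{r,s}(t)$ the vector becomes a singular vector and that ``being annihilated by $\calL_n$ forces the vanishing of the operator,'' so the vector vanishes there. A singular vector is not zero: at $\alpha=\alpha_{r,s}(t)$ the image is $B_{r,s}(t)\,J_{(s^r)}^{(t)}\neq 0$ under $\iota$ (Fact \ref{fct:sing:jack}). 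Moreover, even if the vector did vanish at $\alpha=\alpha_{r,s}$, that would yield divisibility by $\ve_{r,s}=\alpha-\alpha_{r,s}$, not by $\ve_{r,s}^\dagger$. The fact actually needed is that $\varphi(P_{r,s})\ket{\alpha}_\calF$ vanishes at the \emph{reflected} weight $\alpha=\alpha_{-r,-s}(t)$ (which gives the same $(c,h)$ since $\alpha_{r,s}+\alpha_{-r,-s}=2\rho$); this is a nontrivial structural statement about Feigin--Fuchs modules, which the paper imports from \cite{FF:1982} (see also \cite{TK:1986}, \cite{KM:1988}) rather than deriving from the singular-vector condition. The simplest case already exhibits the issue: $\varphi(P_{1,1})\ket{\alpha}_\calF=\alpha\, a_{-1}\ket{\alpha}_\calF$ vanishes at $\alpha=\alpha_{-1,-1}=0$ and is nonzero at $\alpha=\alpha_{1,1}=2\rho(t)$. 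Your sentence about ``the asymmetry of the bosonization'' gestures at the right statement but does not prove it, and the later remark that the vanishing of the $\ve_{r,s}^\dagger$-free term ``is guaranteed by the singular-vector property'' repeats the same error.

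The second half of your sketch is essentially the paper's argument (its Lemma \ref{lem:a_-n}): the coefficient of $a_{-rs}$ in $\varphi(L_{-\lambda})\ket{\alpha}_\calF$ has degree one in $\alpha$, so after extracting the linear factor $\ve_{r,s}^\dagger$ the bracket's $a_{-rs}$-coefficient is independent of $\alpha$ and hence appears only in $g_0$. Be careful, though, with your claim that $a_{-rs}$ ``can only arise from the normal-ordered quadratic piece of one $\calL_{-rs}$'': it also arises from longer monomials such as $\calL_{-1}^{rs}$ through repeated contractions of the form $a_{-n-1}a_{n}$; the correct statement is the degree-one bound, proved by induction on $\ell(\lambda)$. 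Once the vanishing at $\alpha_{-r,-s}$ is supplied from the Feigin--Fuchs theory, the rest of your outline goes through.
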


The point is that $a_{-r s}$ (resp. $a_{r s}$) only appears 
in $g_0(t)$ (resp. $g_0^\dagger(t)$).
Before starting the proof, we show some examples.

\begin{eg}
\begin{align*}
&\varphi(P_{1,1}(c(t),h)) \ket{\alpha}_\calF
=\ve_{1,1}^\dagger(t,\alpha)\ket{\alpha}_\calF,
\\
&\varphi(P_{2,1}(c(t),h)) \ket{\alpha}_\calF
\\
&\phantom{=}
=\ve_{2,1}^\dagger(t,\alpha)
  \Bigl[
   (1-t)t^{-1}(\sqrt{2}t^{1/2} a_{-1}^2-a_{-2})+
   \ve_{2,1}(t,\alpha) a_{-1}^2\Bigr]
  \ket{\alpha}_\calF,
\\
&\varphi(P_{3,1}(c(t),h)) \ket{\alpha}_\calF
\\
&\phantom{=}
=\ve_{3,1}^\dagger(t,\alpha)\Big[
 (1-t)(2-t)t^{-2}(2t a_{-1}^3-3 \sqrt{2}t^{1/2}a_{-2}a_{-1}+2a_{-3})
\\
&\phantom{==\ve_{3,1}^\dagger(t,\alpha)\Big[}
 +\ve_{3,1}(t,\alpha)
  \bigl((3-2t)\sqrt{2}t^{-1/2} a_{-1}^3-(4t^{-1}-3)a_{-2}a_{-1}\bigr)
\\
&\phantom{==\ve_{3,1}^\dagger(t,\alpha)\Big[}
 +(\ve_{3,1}(t,\alpha))^2 a_{-1}^3\Big]\ket{\alpha}_\calF,
\\
&\varphi(P_{4,1}(c(t),h)) \ket{\alpha}_\calF
\\
&\phantom{=}
=\ve_{4,1}^\dagger(t,\alpha)
 \Big[(1-t)(2-t)(3-t)t^{-3}
 \bigl(2\sqrt{2}t^{3/2} a_{-1}^4
  +8\sqrt{2}t^{1/2}a_{-3}a_{-1}
\\
&\phantom{==\ve_{4,1}^\dagger(t,\alpha)\Big[(t-1)(2t-1)(3t-}
 +3\sqrt{2}t^{1/2}a_{-2}^2
 -12t a_{-2}a_{-1}^2-6a_{-4}\bigr)
\\
&\phantom{==\ve_{4,1}^\dagger(t,\alpha)\Big[}
+\ve_{4,1}(t,\alpha)
\bigl(2t^{-1} (11-12t+3t^2)a_{-1}^4
\\
&\phantom{==\ve_{4,1}^\dagger(t,\alpha)\Big[+\ve(t,\alpha)(2t}
-2\sqrt{2}t^{-3/2}(5-2t)(4-3t)a_{-2}a_{-1}^2
\\
&\phantom{==\ve_{4,1}^\dagger(t,\alpha)\Big[+\ve(t,\alpha)(2t}
+t^{-2}(9 -10t +3t^{2})a_{-2}^2
\\
&\phantom{==\ve_{4,1}^\dagger(t,\alpha)\Big[+\ve(t,\alpha)(2t}
+2t^{-2}(12-15t+4t^2)a_{-3}a_{-1}\bigr)
\\
&\phantom{==\ve_{4,1}^\dagger(t,\alpha)\Big[}
 +(\ve_{4,1}(t,\alpha))^2
 \bigl(3\sqrt{2}t^{-1/2}(2-t)a_{-1}^4-2t^{-1}(5-3t)a_{-2}a_{-1}^2\bigr)
\\
&\phantom{==\ve_{4,1}^\dagger(t,\alpha)\Big[}
 +(\ve_{4,1}(t,\alpha))^3 a_{-1}^4 \Big] \ket{\alpha}_\calF.
\end{align*}
\end{eg}

\begin{proof}[Proof of Lemma \ref{lem:Pg}]
(1)
$\varphi(P_{r,s}(c(t),h))\ket{\alpha}_\calF$ is a polynomial of 
degree $r s$ in terms of $\alpha$. 
It has at least one zero at $\alpha=\alpha_{-r,-s}$ by 
the discussion of \cite{FF:1982} (see also \cite{TK:1986, KM:1988}).
Thus the polynomial considered has the form 
\begin{align*}
(\alpha-\alpha_{-r,-s}(t))
\sum_{k=0}^{r s-1}(\alpha-\alpha_{r,s}(t))^k g_k(t) \ket{\alpha}_\calF,
\end{align*}
where 
$g_k(t)\in\bbC[t^{\pm1/2}]\otimes\bbC[a_{-1},a_{-2},\ldots,a_{-r s}]_{r s}$ 
for any $k$.
Note that the coefficients are in $\bbC[t^{\pm1/2}]$
by Fact \ref{fct:fuchs} and the definition of $\varphi$.

Now Lemma \ref{lem:a_-n} proved later means that the coefficient 
of $a_{-r s}$ has degree one as a polynomial of $\alpha$. 
Therefore  $a_{-r s}$ cannot appear in $g_k(t)$ for $k>1$. 
Thus we have the consequence.

(2) is similarly shown so that we omit the detail.
\end{proof}

\begin{lem}\label{lem:IMM}
The set of zeros in $A_{r,s}(t)=0$ includes 
the set of zeros in the coefficient of $a_{-r s}$ in $g_0(t)$.
\end{lem}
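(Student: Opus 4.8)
The plan is to evaluate $A_{r,s}(t)=\lim_{h\to h_{r,s}(t)}N_{r,s}(t,h)/(h-h_{r,s}(t))$ inside the Feigin--Fuchs bosonization and to recognize its zeros as those of the top coefficient of $g_0$. First I would transport the norm to the Fock space: since $\psi$ is a $\Vir$-intertwiner and the contravariant form becomes the Heisenberg vacuum expectation on $\calF_\alpha$, the correspondence \eqref{eq:FF:ch} gives
\begin{align*}
N_{r,s}(t,h)={}_\calF\bra{\alpha}\varphi(P_{r,s}^\dagger(c(t),h))\,\varphi(P_{r,s}(c(t),h))\,\ket{\alpha}_\calF .
\end{align*}
Feeding in the two factorizations of Lemma \ref{lem:Pg} and using \eqref{eq:dee}, the scalar $\ve_{r,s}\ve_{r,s}^\dagger=2(h-h_{r,s}(t))$ factors out; dividing by $h-h_{r,s}(t)$ and letting $\alpha\to\alpha_{r,s}(t)$ (so $\ve_{r,s}\to0$) leaves
\begin{align*}
A_{r,s}(t)=2\,{}_\calF\bra{\alpha_{r,s}}\Big[g_0^\dagger+\sum_{k\ge1}\big(\ve_{r,s}^\dagger(t,\alpha_{r,s})\big)^k g_k^\dagger\Big]\,g_0\,\ket{\alpha_{r,s}}_\calF .
\end{align*}
The decisive structural input of Lemma \ref{lem:Pg} is that the highest mode $a_{-rs}$ sits only in $g_0$, and its coefficient there is exactly the Laurent polynomial $C(t)$ whose zeros I must produce.

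Next I would bring in the singular-vector structure. By Lemma \ref{lem:Pg} and Fact \ref{fct:sing:jack}, $g_0\ket{\alpha_{r,s}}_\calF$ equals $\psi(\ket{\chi_{r,s}})$ up to the nonzero scalar $\ve_{r,s}^\dagger(t,\alpha_{r,s})$, hence is annihilated by every $\calL_n$ with $n>0$. Two facts then drive the argument. First, $[a_{rs},g_0]=rs\,C(t)$ together with $a_{rs}\ket{\alpha_{r,s}}_\calF=0$ gives the clean relation $a_{rs}\,g_0\ket{\alpha_{r,s}}_\calF=rs\,C(t)\ket{\alpha_{r,s}}_\calF$, which pins $C$ to the singular vector. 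Second, since $P_{r,s}^\dagger\in U(\Vir_{+})$, the operator $\varphi(P_{r,s}^\dagger)$ is a polynomial in the $\calL_n$ ($n>0$) and so kills the singular vector. Combining this with the identity ${}_\calF\bra{\alpha}\varphi(P_{r,s}^\dagger)=\ve_{r,s}(t,\alpha)\,{}_\calF\bra{\alpha}[g_0^\dagger+\cdots]$ and differentiating in $\alpha$ at $\alpha_{r,s}$, one rewrites the previous display using $\partial_{a_0}\calL_n=a_n$ as
\begin{align*}
A_{r,s}(t)=2\,{}_\calF\bra{\alpha_{r,s}}\big(\partial_{a_0}\varphi(P_{r,s}^\dagger)\big)\big|_{a_0=\alpha_{r,s}}\,g_0\,\ket{\alpha_{r,s}}_\calF .
\end{align*}

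Now $\partial_{a_0}\varphi(P_{r,s}^\dagger)$ is a sum of monomials in the $\calL_n$ ($n>0$) in which a single factor $\calL_n$ has been replaced by $a_n$. Acting on the singular vector, any monomial still ending in some $\calL_n$ dies, so only those in which the differentiated factor stands rightmost survive; writing $P_{r,s}^\dagger=\sum_{\lambda\vdash rs}c_\lambda L_\lambda$ with $\lambda=(\lambda_1,\dots,\lambda_k)$, this collapses $A_{r,s}(t)$ to a sum of expectations ${}_\calF\bra{\alpha_{r,s}}\calL_{\lambda_1}\cdots\calL_{\lambda_{k-1}}\,a_{\lambda_k}\,g_0\,\ket{\alpha_{r,s}}_\calF$. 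The term $\lambda=(rs)$ contributes $2\,c_{(rs)}\,rs\,C(t)$ by the relation above, manifestly divisible by $C(t)$; the remaining terms carry $a_{\lambda_k}$ with $\lambda_k<rs$, so they probe the lower slots $\partial_{a_{-\lambda_k}}g_0$ rather than $C$ directly.

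The hard part will be showing that these remaining terms conspire to be divisible by $C(t)$ as well, i.e. that the whole expectation vanishes at every zero $t_0$ of $C$. When $g_0(t_0)=0$ this is immediate; the substantive case is $g_0(t_0)\neq0$, where $w_0:=g_0(t_0)\ket{\alpha_{r,s}}_\calF$ is a genuine singular vector with vanishing $a_{-rs}$-slot, i.e. $a_{rs}w_0=0$. The expected mechanism is that the equations $\calL_n w_0=0$ tie the lower slots $\partial_{a_{-\lambda_k}}g_0(t_0)$ together tightly enough that the sum cancels; since $C$ is, up to the explicit factor supplied by Fact \ref{fct:sing:jack}, the coefficient of the power sum $p_{rs}$ in $J_{(s^r)}^{(t)}$, this amounts to a vanishing statement for a single Jack coefficient. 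For $r=1$ or $s=1$ the collapse is transparent, because $C$ then divides every coefficient of $g_0$; in general the common factor lives only in the $a_{-rs}$-slot, and extracting it from the singular-vector relations is the real work of the lemma.
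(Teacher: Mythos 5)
Your setup---transporting the norm to the Fock space, factoring out $\ve_{r,s}(t,\alpha)\ve_{r,s}^\dagger(t,\alpha)=2(h-h_{r,s}(t))$ via Lemma \ref{lem:Pg} and \eqref{eq:dee}, and arriving at $A_{r,s}(t)=2\,{}_\calF\bra{\alpha_{r,s}}\bigl[g_0^\dagger+\sum_{k\ge1}(\ve_{r,s}^\dagger)^k g_k^\dagger\bigr]g_0\ket{\alpha_{r,s}}_\calF$---is the right frame, and it is essentially the one the paper works in. But the proof then stops short of the lemma: you yourself flag the case ``$g_0(t_0)\neq 0$ but the $a_{-rs}$-coefficient $C(t_0)=0$'' as ``the real work,'' and you offer only an ``expected mechanism'' for it. That is a genuine gap, and the $\partial_{a_0}$ detour you take to reach it is a wrong turn: the resolution is not that the lower slots of $g_0$ conspire to cancel inside the expectation value, but that the problematic case cannot occur at all.

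The missing idea is a rigidity statement, which is exactly what the paper's proof consists of (citing the inductive argument of \cite[Appendix B]{IMM:1992}): a vector $w_0\in\calF_{\alpha_{r,s},rs}$ annihilated by every $\calL_n$ with $n>0$ --- which $g_0(t_0)\ket{\alpha_{r,s}}_\calF$ is, being the bosonized singular vector up to the nonzero scalar $\ve_{r,s}^\dagger(t_0,\alpha_{r,s})$ --- and whose coefficient of $a_{-rs}\ket{\alpha_{r,s}}_\calF$ vanishes must be identically zero. One shows $a_n w_0=0$ for all $n\ge 1$ by induction, seeded by your own observation $a_{rs}w_0=rs\,C(t_0)\ket{\alpha_{r,s}}_\calF=0$ and propagated by the relations $\calL_n w_0=0$; nondegeneracy of the pairing on $\calF_{\alpha,rs}$ then forces $w_0=0$. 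With this in hand the lemma is immediate from your first display: at any zero $t_0$ of $C(t)$ one has $g_0(t_0)\ket{\alpha_{r,s}}_\calF=0$, hence $A_{r,s}(t_0)=0$. Your case split collapses --- the ``substantive case'' is vacuous --- and no analysis of the lower slots, of $\partial_{a_0}\varphi(P_{r,s}^\dagger)$, or of further Jack coefficients is needed.
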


\begin{proof}
Precisely speaking, if the condition holds, then 
one can show that the state $\varphi(P_{r,s}(c(t),h))\ket{\alpha}_\calF$ 
vanishes identically.
In order to show it, 
it is enough to show by induction that 
$a_n \cdot \varphi(P_{r,s}(c(t),h))\ket{\alpha}_\calF=0$ 
for any $n$.
This is proved in \cite[Appendix B]{IMM:1992}.
\end{proof}


\begin{lem}\label{lem:a-rs}
The coefficient $c_\lambda(t)$ of $a_{-r s}$ in $g_0(t)$ is 
\begin{align*}
c_\lambda(t)
=\prod_{\substack{(k,l)\in \bbZ^2 \setminus\{(r,s)\} \\ 1\le k\le r,\ 1\le l\le s}}(k t^{-1}-l)
\cdot\prod_{\substack{(k,l)\in \bbZ^2 \setminus \{(0,0)\} \\ 0\le k \le r-1,\ 0\le l\le s-1}} (l t -k).
\end{align*}
\end{lem}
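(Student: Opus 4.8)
The plan is to pin down $c_\lambda(t)$ by evaluating at the singular highest weight $\alpha=\alpha_{r,s}(t)$ and translating into Jack functions via Fact \ref{fct:sing:jack}. By Lemma \ref{lem:Pg}(1) the mode $a_{-rs}$ sits only inside $g_0(t)$, and setting $\alpha=\alpha_{r,s}(t)$ annihilates every summand carrying a factor $\ve_{r,s}(t,\alpha)$, so that
\[
\varphi(P_{r,s}(t))\ket{\alpha_{r,s}(t)}_\calF
 =\ve_{r,s}^\dagger(t,\alpha_{r,s}(t))\,g_0(t)\ket{\alpha_{r,s}(t)}_\calF .
\]
The left-hand side is $\psi(\ket{\chi_{r,s}})$, so applying $\iota$ and Fact \ref{fct:sing:jack} gives $\ve_{r,s}^\dagger(t,\alpha_{r,s}(t))\,\iota\bigl(g_0(t)\ket{\alpha_{r,s}(t)}_\calF\bigr)=B_{r,s}(t)\,J_{(s^r)}^{(t)}$. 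Since $\iota$ sends $a_{-rs}\ket{\alpha}_\calF$ to $p_{(rs)}/\sqrt{2t}$ (the partition $(rs)$ has length one) while every other monomial in $g_0(t)$ produces a $p_\mu$ with $\mu\neq(rs)$, comparing the coefficient of $p_{(rs)}$ on both sides yields
\[
c_\lambda(t)=\frac{\sqrt{2t}}{\ve_{r,s}^\dagger(t,\alpha_{r,s}(t))}\,B_{r,s}(t)\,[p_{(rs)}]\,J_{(s^r)}^{(t)},
\]
where $[p_{(rs)}]$ denotes extraction of the $p_{(rs)}$-coefficient.

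Next I evaluate the scalar prefactor. From \eqref{eq:hw:rs:boson} one computes $\ve_{r,s}^\dagger(t,\alpha_{r,s}(t))=\alpha_{r,s}(t)-\alpha_{-r,-s}(t)=\sqrt{2}\,(r t^{-1/2}-s t^{1/2})$, hence $\sqrt{2t}\,/\,\ve_{r,s}^\dagger(t,\alpha_{r,s}(t))=(r t^{-1}-s)^{-1}$. Using the definition \eqref{eq:B:rst} of $B_{r,s}(t)$, the quotient $B_{r,s}(t)/(r t^{-1}-s)$ is exactly $\prod_{(k,l)\neq(r,s),\,1\le k\le r,\,1\le l\le s}(k t^{-1}-l)$, the first factor asserted for $c_\lambda(t)$. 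Thus the lemma reduces to the single identity
\[
[p_{(rs)}]\,J_{(s^r)}^{(t)}
 =\prod_{\substack{(k,l)\in\bbZ^2\setminus\{(0,0)\}\\ 0\le k\le r-1,\ 0\le l\le s-1}}(l t-k),
\]
which is precisely the second factor.

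It remains to determine the top single-part power-sum coefficient of an integral Jack function. More generally I would establish, for every $\lambda\vdash n$,
\[
[p_{(n)}]\,J_\lambda^{(t)}
 =\prod_{\substack{\square=(i,j)\in\lambda\\ (i,j)\neq(1,1)}}\bigl(t(j-1)-(i-1)\bigr),
\]
and then specialize to the rectangle $\lambda=(s^r)$, where $(i-1,j-1)$ ranges over $0\le i-1\le r-1$, $0\le j-1\le s-1$, giving the displayed identity. This coefficient formula can be read off from the theory of Jack symmetric functions (\cite{M:1995}, \cite{St:1989}); alternatively one proves it by first checking the single row and single column by direct computation (the outcomes $t^{\,n-1}(n-1)!$ and $(-1)^{n-1}(n-1)!$ agree with the shapes $(s^1)$ and $(1^r)$ underlying the examples $P_{1,s}$, $P_{r,1}$ in \eqref{eq:iotapsi:eg}), and then propagating to arbitrary $\lambda$ by induction on the number of rows using the Jack analogue of the Pieri/Murnaghan--Nakayama rule for the adjoint operator $p_n^\perp$.

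I expect this last step to be the main obstacle. In the Schur limit the corresponding coefficient $[p_n]\,s_\lambda$ vanishes unless $\lambda$ is a hook, whereas here $[p_{(rs)}]\,J_{(s^r)}^{(t)}$ is a nonzero polynomial in $t$ for the genuinely two-dimensional rectangle $(s^r)$; controlling this requires the full Jack deformation rather than a single character value, and the induction must track how $p_n^\perp$ removes generalized border strips with their $t$-weights. Once the coefficient identity is in hand, assembling the two products completes the proof of Lemma \ref{lem:a-rs}.
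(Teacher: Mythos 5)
Your argument is correct and follows the paper's own proof essentially verbatim: evaluate Lemma \ref{lem:Pg}(1) at $\alpha=\alpha_{r,s}(t)$ to isolate $g_0(t)$, compare with Fact \ref{fct:sing:jack}, extract the $p_{(rs)}$-coefficient, and compute the scalar $\sqrt{2t}/\ve_{r,s}^\dagger(t,\alpha_{r,s}(t))=(rt^{-1}-s)^{-1}$ so that $B_{r,s}(t)$ contributes the first product. The Jack coefficient identity you flag as the main remaining obstacle is exactly the paper's Corollary \ref{cor:J:p}, which is deduced there from the Hanlon--Stanley--Stembridge expansion of $p_n$ in monic Jack polynomials (Fact \ref{fct:HSS}, cited from \cite{HSS:1992}) together with the norm formula \eqref{eq:jack:P:norm} and the ratio \eqref{eq:jack:JP}, so no Pieri-rule induction is needed.
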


\begin{proof}
Note that by Fact \ref{fct:sing:jack} and Lemma \ref{lem:Pg} we have
\begin{align*}
 (\alpha_{r,s}(t)-\alpha_{-r,-s}(t))\cdot \iota(g_0(t)\ket{\alpha}_\calF)
=B_{r,s}(t) J_{(s^r)}^{(t)}.
\end{align*}
On the other hand, by Corollary \ref{cor:J:p} in \S \ref{subsec:appendix} 
we have
\begin{align*}
J_{(s^r)}^{(t)}=\sum_{\mu \vdash r s} \theta_{(s^r)}^{\mu}(t) p_\mu,
\quad
\theta_{(s^r)}^{(r s)}(t)
=\prod_{\substack{(k,l)\in \bbZ^2 \setminus \{(0,0)\} \\ 0\le k \le r-1,\ 0\le l\le s-1}} (l t-k).
\end{align*}
Since $\iota(a_{-r s}\ket{\alpha}_\calF)=p_{rs}/\sqrt{2t}$, we have
\begin{align*}
 (\alpha_{r,s}(t)-\alpha_{-r,-s}(t)) c_\lambda(t)/\sqrt{2t}  
=B_{r,s}(t)\cdot\theta_\lambda^{(n)}(t).
\end{align*}
Then an easy calculation shows the statement.
\end{proof}

\begin{cor}\label{cor:zero:pos}
$A_{r,s}(t,h)$ has zeros at 
\begin{align*}
S'\seteq
&\{t=k/l \mid 1\le k\le r,\ 1\le l \le s, \ (k,l)\neq(r,s)\}
\\
&\cup
\{t=k/l \mid 1\le k\le r-1,\ 1\le l \le s-1\}\quad
\text{(multiplicities included)}.
\end{align*} 
In particular, $A_{r,s}(t,h)$ has at least $\# S'=2r s-r-s$ zeros.
\end{cor}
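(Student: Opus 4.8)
The plan is to deduce this directly from the two preceding lemmas, so that essentially no new computation is needed. By Lemma \ref{lem:IMM}, every zero of the coefficient $c_{\lambda}(t)$ of $a_{-rs}$ in $g_0(t)$ is a zero of $A_{r,s}(t)$; hence it suffices to locate and count the (nonzero) zeros of the explicit factored expression for $c_{\lambda}(t)$ supplied by Lemma \ref{lem:a-rs}. First I would treat the two displayed products separately and read the roots off factor by factor.

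The first product $\prod_{(k,l)\neq(r,s),\,1\le k\le r,\,1\le l\le s}(kt^{-1}-l)$ consists of $rs-1$ factors, each of which vanishes (away from the origin) exactly at $t=k/l$; this accounts for the first part of $S'$, namely the pairs with $1\le k\le r$, $1\le l\le s$ and $(k,l)\neq(r,s)$. For the second product $\prod_{(k,l)\neq(0,0),\,0\le k\le r-1,\,0\le l\le s-1}(lt-k)$ I would split the index set according to whether $l=0$, $k=0$, or both indices are positive: the factors with $l=0$ (and hence $k\ge1$) are nonzero constants $-k$; the factors with $k=0$ (and hence $l\ge1$) contribute $lt$, which vanishes only at $t=0$; and the factors with $k,l\ge1$ vanish at $t=k/l$, producing precisely the $(r-1)(s-1)$ elements of the second part of $S'$.

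The one point that genuinely requires care is the behavior at $t=0$, and I expect this bookkeeping — rather than any conceptual difficulty — to be the only obstacle here. Rewriting $kt^{-1}-l=(k-lt)/t$ shows that the first product carries a pole of order $rs-1$ at the origin, while the $k=0$ factors of the second product contribute a zero of order $s-1$ there; consequently, as a Laurent polynomial in $t$, $c_{\lambda}(t)$ has a net pole of order $s(r-1)$ at $t=0$ and no zero there. Thus its nonzero zeros form exactly the multiset $S'$, and by Lemma \ref{lem:IMM} these are zeros of $A_{r,s}(t)$. Counting with multiplicity gives $\#S'=(rs-1)+(r-1)(s-1)=2rs-r-s$, the claimed lower bound. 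Since the substantive input — the bosonization computation behind Lemma \ref{lem:IMM} and the Jack-coefficient identity behind Lemma \ref{lem:a-rs} — has already been established, the corollary reduces to this clean accounting of roots and the verification that the factored form yields $2rs-r-s$ roots (repetitions included) with no spurious cancellation.
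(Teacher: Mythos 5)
Your proposal is correct and follows exactly the route the paper takes: the paper's own proof of this corollary is the single sentence that it is a consequence of Lemma \ref{lem:IMM} and Lemma \ref{lem:a-rs}, and your factor-by-factor accounting of the roots of $c_\lambda(t)$ (including the careful treatment of the pole at $t=0$ and the count $(rs-1)+(r-1)(s-1)=2rs-r-s$) simply makes explicit the bookkeeping the paper leaves to the reader.
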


\begin{proof}
This is the consequence of Lemma \ref{lem:IMM} and Lemma \ref{lem:a-rs}.
\end{proof}

\begin{lem}\label{lem:t-t}
$A_{r,s}(t)=A_{r,s}(-t)$.
\end{lem}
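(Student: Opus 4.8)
The plan is to establish the symmetry $A_{r,s}(t) = A_{r,s}(-t)$ by tracing it back to a symmetry of the underlying norm $N_{r,s}(t,h)$, since $A_{r,s}$ is by definition \eqref{eq:Ars} the leading coefficient in the expansion of $N_{r,s}(t,h)$ around $h = h_{r,s}(t)$. First I would examine how the relevant objects transform under $t \mapsto -t$. Under this substitution, the central charge parametrization \eqref{eq:ct} gives $c(-t) = 13 + 6(t + t^{-1})$, and the zero locus \eqref{eq:hrst} becomes $h_{r,s}(-t) = -\big((r + st)^2 - (t+1)^2\big)/4t$. The key point is to understand the effect of $t \mapsto -t$ on the singular vector generator $P_{r,s}(t)$, which by Fact~\ref{fct:fuchs} is a Laurent polynomial in $t$ with coefficients in $U(\Vir_-)$.

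The cleanest route I would pursue is through the bosonized description assembled in \S\ref{subsec:step2}. Recall from \eqref{eq:hw:rs:boson} that $\rho(t) = \tfrac{1}{\sqrt 2}(t^{-1/2} - t^{1/2})$ and $\alpha_{r,s}(t) = \tfrac{1}{\sqrt 2}\big((r+1)t^{-1/2} - (s+1)t^{1/2}\big)$. I would check how the shift variables $\ve_{r,s}(t,\alpha)$ and $\ve_{r,s}^\dagger(t,\alpha)$, together with the factorized decomposition in Lemma~\ref{lem:Pg}, behave under $t \mapsto -t$ (which at the level of $t^{1/2}$ amounts to $t^{1/2} \mapsto i\,t^{1/2}$ for a suitable branch). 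Since by \eqref{eq:dee} we have $h - h_{r,s}(t) = \tfrac12 \ve_{r,s}(t,\alpha)\,\ve_{r,s}^\dagger(t,\alpha)$, controlling the transformation of these two linear factors controls the transformation of the whole expansion variable, and hence of $A_{r,s}$.

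The most direct and robust argument, however, may come from the Jack symmetric function description. By Fact~\ref{fct:sing:jack} the bosonized singular vector is proportional to $J_{(s^r)}^{(t)}$, and the norm formula \eqref{eq:jack:P:norm} for Jack functions depends on $t$ through products of the form $t\,a_\lambda(\square) + \ell_\lambda(\square) + t$ and $t\,a_\lambda(\square) + \ell_\lambda(\square) + 1$. I would exploit the fact that the defining inner product $\langle p_\lambda, p_\mu\rangle_t = \delta_{\lambda,\mu} z_\lambda t^{\ell(\lambda)}$ and the resulting norms carry a well-understood parity in $t$, so that the norm $N_{r,s}(t,h)$ inherits an even/odd structure in $t$ whose leading coefficient $A_{r,s}(t)$ comes out even. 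Comparing this with the already-determined zero set $S'$ from Corollary~\ref{cor:zero:pos}, the symmetry $t \mapsto -t$ is precisely what is needed to produce the reflected zeros $-S'$ and thereby exhaust the degree bound from Corollary~\ref{cor:zero:1}.

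The main obstacle I anticipate is bookkeeping the half-integer powers of $t$ correctly: the factor $R_{r,s}(t)$ in \eqref{eq:Nrs} is manifestly built from $k\,t^{-1/2} + l\,t^{1/2}$, so $t \mapsto -t$ introduces factors of $i$ that must cancel in pairs. I would verify that the total number of linear factors in $R_{r,s}(t)$ is even, so that these $i$'s combine to $\pm 1$ and, after tracking the overall sign against the explicit low-rank examples computed after \eqref{eq:Nrs:def} (for instance $N_{2,2}(t,h)$, whose leading coefficient is visibly symmetric under $t \mapsto t^{-1}$ and, one checks, under $t \mapsto -t$), conclude that $A_{r,s}(t) = A_{r,s}(-t)$ with the correct sign. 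The remaining verification is a routine parity count rather than a genuine difficulty, so I would state it and omit the arithmetic.
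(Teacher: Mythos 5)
There is a genuine gap here. You correctly identify that ``the key point is to understand the effect of $t\mapsto -t$ on $P_{r,s}(t)$,'' but you never actually supply that input, and none of the substitute arguments you sketch can deliver it. The paper's proof rests entirely on a nontrivial theorem of Feigin--Fuchs (Fact \ref{fct:t-t}, \cite[Theorem 1.13]{FF:1990}): the \emph{anti-automorphism} of $U(\Vir_-)[t,t^{-1}]$ sending $t\mapsto -t$ and $L_{-i}\mapsto (-1)^{i-1}L_{-i}$ fixes $P_{r,s}(t)$. It is this specific sign twist of the generators, compounded with $t\mapsto -t$, that forces every coefficient $n_{r,s,k}(t)$ in the expansion \eqref{eq:t-t} of $N_{r,s}(t,h)$ to be even in $t$. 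Note that $t\mapsto -t$ alone is \emph{not} a symmetry of the setup: $c(-t)=26-c(t)$ and $h_{r,s}(-t)=1-rs-h_{r,s}(t)$, so the Shapovalov data genuinely changes, and the evenness of $A_{r,s}$ is not something one can read off from parity bookkeeping.

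The two concrete routes you commit to do not close this gap. First, the Jack inner product $\langle p_\lambda,p_\mu\rangle_t=\delta_{\lambda,\mu}z_\lambda t^{\ell(\lambda)}$ and the norm \eqref{eq:jack:P:norm} have no ``well-understood parity'' in $t$ (the factors $ta_\lambda(\square)+\ell_\lambda(\square)+t$ are neither even nor odd), and more fundamentally the Jack description via Fact \ref{fct:sing:jack} lives only on the singular locus $h=h_{r,s}(t)$, where $N_{r,s}$ vanishes identically; it says nothing about the first-order behavior in $h-h_{r,s}(t)$ that defines $A_{r,s}(t)$. Second, verifying that the conjectured factor $R_{r,s}(t)$ is invariant under $t^{1/2}\mapsto it^{1/2}$ (your parity count of the linear factors $kt^{-1/2}+lt^{1/2}$) is circular: the lemma is needed precisely to prove $A_{r,s}\propto R_{r,s}$, so evenness of the target cannot be used as evidence for evenness of $A_{r,s}$. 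Your bosonization route (tracking $\ve_{r,s}$, $\ve_{r,s}^\dagger$ under $t^{1/2}\mapsto it^{1/2}$) could in principle be developed into a proof of the Feigin--Fuchs symmetry, but as written it is only a declaration of intent; to repair the argument you should either cite Fact \ref{fct:t-t} directly or carry that computation out, including the induced signs on the Heisenberg generators.
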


\begin{proof}
In fact one can show a stronger statement:
if one expands 
\begin{align}\label{eq:t-t}
N_{r,s}(t,h)=\sum_{k=1}^{r s} (h-h_{r,s}(t))^k n_{r,s,k}(t),
\end{align}
then we have $n_{r,s,k}(t)=n_{r,s,k}(-t)$. 
$A_{r,s}(t)$ is nothing but $n_{r,s,1}(t)$.
This is the consequence of the following 
Fact \ref{fct:t-t} stated in \cite[Theorem 1.13]{FF:1990} 
(for a proof, see also \cite{K:1992}).
\end{proof}

\begin{fct}[{\cite[Theorem 1.13]{FF:1990}}]\label{fct:t-t}
The anti-automorphism on $U(\Vir_-)[t,t^{-1}]$ 
defined by 
\begin{align*}
t\mapsto -t,\quad L_{-i}\mapsto (-1)^{i-1}L_{-i}
\end{align*}
takes $P_{r,s}(t)$ into itself.
\end{fct}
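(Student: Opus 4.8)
The plan is to prove the statement by the uniqueness of the singular vector, after recasting the map as the composite of $\dagger$ with an honest automorphism. First I would record that the prescription $L_{-i}\mapsto(-1)^{i-1}L_{-i}$, $t\mapsto-t$ really does define an anti-automorphism $\tau$ of $U(\Vir_-)\otimes\bbC[t,t^{-1}]$: on the only relations that matter, $[L_{-i},L_{-j}]=(j-i)L_{-(i+j)}$, the two sign factors $(-1)^{i-1}(-1)^{j-1}=(-1)^{i+j}$ produced by an anti-automorphism exactly match the sign $(-1)^{i+j-1}$ carried by $\tau(L_{-(i+j)})$ once the bracket is reversed, so $\tau$ is consistent. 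Since $\tau$ multiplies each $L_{-i}$ by a scalar it preserves the weight grading, and $\tau(L_{-1}^{r s})=L_{-1}^{r s}$, so $\tau(P_{r,s}(t))$ is again an element of $U(\Vir_-)\otimes\bbC[t,t^{-1}]$ of level $r s$ with leading term $L_{-1}^{r s}$. By Fact~\ref{fct:fuchs} and the uniqueness of the singular vector at a given level, it therefore suffices to show that $\tau(P_{r,s}(t))\ket{c(t),h_{r,s}(t)}$ is again a singular vector of $M(c(t),h_{r,s}(t))$.

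Next I would pass to the automorphism $\gamma:=\dagger\circ\tau$, given explicitly by $L_n\mapsto(-1)^{n-1}L_{-n}$ for all $n$, $C\mapsto-C$ and $t\mapsto-t$. A direct check on \eqref{eq:Vir} shows $\gamma$ is a (semilinear) algebra automorphism with $\gamma^2=\mathrm{id}$ that interchanges $\Vir_+$ and $\Vir_-$. Because $\dagger^2=\mathrm{id}$, the identity $\tau=\dagger\circ\gamma$ gives the equivalence $\tau(P_{r,s}(t))=P_{r,s}(t)\iff\gamma(P_{r,s}(t))=P_{r,s}^{\dagger}(t)$. Thus the problem becomes: the automorphism $\gamma$ carries the singular-vector generator to the dual singular-vector generator. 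Since $\gamma$ sends $U(\Vir_-)$ onto $U(\Vir_+)$ and $\gamma(L_{-1}^{r s})=L_1^{r s}$, the element $\gamma(P_{r,s}(t))$ has the correct shape $L_1^{r s}+\cdots$ of $P_{r,s}^{\dagger}(t)$; one more application of uniqueness, now for the dual singular vector of $M^*(c(t),h_{r,s}(t))$, reduces everything to showing that $\gamma$ maps the singular-vector datum to a \emph{dual} singular-vector datum \emph{at the same highest weight} $(c(t),h_{r,s}(t))$.

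The main obstacle is precisely this last point, the invariance of the highest weight. Naively $\gamma$ replaces $(C,L_0,t)$ by $(-C,-L_0,-t)$, so that the central charge appears to become $-c(-t)$, which is \emph{not} $c(t)$; the resolution is that what controls singularity is not $(c,h)$ literally but the Kac vanishing condition, and one must check that this condition is stable under the combined reflection. I would verify this from the explicit parametrizations \eqref{eq:ct} and \eqref{eq:hrst}: a short computation gives $c(-t)=13+6(t+t^{-1})$ and $-h_{r,s}(-t)=h_{r,s}(t)+r s-1$, and, together with the two-fold symmetries $c(t)=c(t^{-1})$ in \eqref{eq:symmetry:c} and $h_{r,s}(t)=h_{s,r}(t^{-1})$ behind \eqref{eq:t-t^-1}, these bring the transformed datum back onto the same Kac zero of \eqref{eq:Kac}. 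Equivalently, at generic $(c,h)$, where the contravariant form is nondegenerate, I would reduce $\gamma(P_{r,s}(t))=P_{r,s}^{\dagger}(t)$ to the scalar identities $\bra{c,h}\gamma(P_{r,s}(t))L_{-\mu}\ket{c,h}=\bra{c,h}L_\mu P_{r,s}(t)\ket{c,h}$ for all $\mu\vdash r s$, the right side coming from contravariance \eqref{eq:contrav:contrav}; the left side is then transported by $\gamma$ into the right side, the whole content being the compatibility of the vacuum expectation with $\gamma$.

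Finally I would note the alternative, and probably cleaner, route through the Feigin--Fuchs bosonization of \S\ref{sect:boson}, in which $P_{r,s}(t)$ is realized by screening operators and the reflection $t\mapsto-t$ becomes a manifest symmetry of the free-field construction; this is the viewpoint of \cite{FF:1990}, and it sidesteps the highest-weight bookkeeping above at the cost of tracking the $\sqrt{2t}$ normalizations in $\iota$. I expect the uniqueness argument to be the most transparent, with the single genuinely delicate step being the stability of the Kac condition under the combined reflection.
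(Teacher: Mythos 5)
The paper does not prove this statement: it is imported as a Fact, with the proof delegated to \cite[Theorem 1.13]{FF:1990} and \cite{K:1992}. So the only question is whether your argument stands on its own, and I do not think it does. Your preliminary steps are fine: $\tau$ is indeed a well-defined anti-automorphism of $U(\Vir_-)\otimes\bbC[t,t^{-1}]$, it preserves the level and fixes $L_{-1}^{rs}$, and for generic $t$ the level-$rs$ singular vector of $M(c(t),h_{r,s}(t))$ is unique up to scalar, so it would indeed suffice to show that $\tau(P_{r,s}(t))\ket{c(t),h_{r,s}(t)}$ is singular. The gap is in the step you yourself flag as delicate. To transport the singularity condition $L_kP_{r,s}(t)\ket{c(t),h_{r,s}(t)}=0$ you must extend $\tau$ to an anti-involution of all of $U(\Vir)$, and the relations \eqref{eq:Vir} force $L_0\mapsto -L_0$ and $C\mapsto -C$. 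Combined with $t\mapsto -t$, the annihilator ideal of the highest weight vector is carried to the annihilator at the weight $\bigl(-c(-t),\,-h_{r,s}(-t)\bigr)=\bigl(c(t)-26,\;h_{r,s}(t)+rs-1\bigr)$ --- your own identity $-h_{r,s}(-t)=h_{r,s}(t)+rs-1$ already exhibits the shift. The symmetries $c(t)=c(t^{-1})$ and $h_{r,s}(t)=h_{s,r}(t^{-1})$ preserve the value of $c(t)$ and therefore cannot undo the shift $c\mapsto c-26$; and even if the shifted weight lies on the Kac locus, that is not enough, because the singular vector depends on the actual highest weight, not merely on the vanishing of \eqref{eq:Kac}. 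The fallback via ``compatibility of the vacuum expectation with $\gamma$'' fails for the same reason: for example $\bra{c,h}L_{-1}L_1\ket{c,h}=0$ while $\bra{c,h}\gamma(L_{-1}L_1)\ket{c,h}=\bra{c,h}L_1L_{-1}\ket{c,h}=2h$, since $\gamma$ exchanges the two Harish--Chandra decompositions.

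What your argument actually produces is a correspondence between the singular vector at $(c(t),h_{r,s}(t))$ and a dual-singular-vector condition at the ghost-shifted weight $(c(t)-26,\,h_{r,s}(t)+rs-1)$; turning that into the claimed self-symmetry of $P_{r,s}(t)$ is precisely the nontrivial content of \cite[Theorem 1.13]{FF:1990}, not a bookkeeping step. (A consistency check: composing your $\gamma$ with the Chevalley involution $L_n\mapsto -L_{-n}$, $C\mapsto -C$ returns the inner symmetry $L_n\mapsto(-1)^nL_n$ together with $t\mapsto -t$, which maps $M(c(t),h_{r,s}(t))$ to $M(c(-t),h_{r,s}(-t))$ --- a tautology, confirming that no new information has been extracted.) The bosonization route you mention at the end is closer to an actual proof, but note that $t\mapsto -t$ is not a manifest symmetry there either: $\rho(-t)=\tfrac{1}{\sqrt2}\bigl((-t)^{-1/2}-(-t)^{1/2}\bigr)$ involves an imaginary rotation of the boson, so that step would also need a real argument. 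As written, the proposal identifies the right obstacle but does not overcome it.
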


\begin{cor}\label{cor:zero:2}
Let us denote $-S'\seteq\{-t \mid t\in S'\}$.
Then $A_{r,s}(t)$ has zeros at $S'\cup (-S')$ (multiplicities included).
Thus $A_{r,s}(t)$  has at least $2\cdot \# S=4r s-2r-2s$ zeros.
\end{cor}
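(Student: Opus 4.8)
The plan is to obtain Corollary \ref{cor:zero:2} as a bookkeeping combination of the two results already established in Step 2, namely the explicit zeros of Corollary \ref{cor:zero:pos} and the reflection symmetry of Lemma \ref{lem:t-t}. By Corollary \ref{cor:zero:pos}, $A_{r,s}(t)$ vanishes on the multiset $S'$ with the multiplicities recorded there, and $\#S' = 2rs - r - s$. The only extra ingredient is the identity $A_{r,s}(t) = A_{r,s}(-t)$ from Lemma \ref{lem:t-t}.

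First I would check that evenness transports zeros together with their multiplicities under $t \mapsto -t$. Near a zero $t_0 \in S'$ of multiplicity $m$, write $A_{r,s}(t) = (t-t_0)^m B(t)$ with $B(t_0) \neq 0$; substituting $u = -t$ into $A_{r,s}(t) = A_{r,s}(-t)$ shows that $A_{r,s}$ vanishes at $-t_0$ to the same order $m$. Thus each zero of $A_{r,s}$ lying in $S'$ produces a zero of equal multiplicity in $-S'$, and $A_{r,s}$ vanishes on the multiset $S' \cup (-S')$.

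The single point that must be verified is that these two families are genuinely distinct, so that the multiplicities add rather than overlap. This is immediate from the description in Corollary \ref{cor:zero:pos}: every element of $S'$ has the form $k/l$ with $k,l \ge 1$, hence is strictly positive, so in particular $0 \notin S'$ and every element of $-S'$ is strictly negative. Therefore $S' \cap (-S') = \emptyset$ as sets, the map $t \mapsto -t$ is a multiplicity-preserving bijection $S' \to -S'$, and the multiset union has cardinality $\#S' + \#(-S') = 2\#S'$. Consequently $A_{r,s}(t)$ has at least $2\#S' = 2(2rs - r - s) = 4rs - 2r - 2s$ zeros counted with multiplicity.

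I expect no real obstacle here; the corollary is purely a combination of the preceding lemma and corollary, and the disjointness of $S'$ and $-S'$ is the only (trivial) thing requiring verification. It is worth recording that this lower bound of $4rs - 2r - 2s$ matches exactly the upper bound of Corollary \ref{cor:zero:1}, and it is precisely this coincidence that will allow the final sub-step of Step 2 to conclude $S = S' \cup (-S')$ and thereby pin down $A_{r,s}(t)$ up to a numerical constant.
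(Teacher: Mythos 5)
Your proposal is correct and follows the same route as the paper, which simply derives the corollary from Corollary \ref{cor:zero:pos} and Lemma \ref{lem:t-t}; you merely make explicit the (easy) points the paper leaves implicit, namely that evenness in $t$ preserves multiplicities and that $S'$ and $-S'$ are disjoint because every element of $S'$ is a positive rational $k/l$.
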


\begin{proof}
This is the consequence of Corollary \ref{cor:zero:pos} 
and Lemma \ref{lem:t-t}.
\end{proof}

\begin{cor}
$A_{r,s}(t)$ is equal to 
\[A'_{r,s}(t)\seteq\prod_{\substack{(k,l) \in \bbZ^2, \\ 
                                    1-r \le k \le r,  \ 
                                    1-s \le l \le s,  \\
                                    (k,l)\neq(0,0), (r,s).}} 
                    (k t^{-1/2}+l t^{1/2})\]
up to a numerical factor in $\bbC$.
\end{cor}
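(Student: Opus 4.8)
The plan is to combine the upper and lower bounds on the zeros of $A_{r,s}(t)$ already in hand, and then to check that the explicit Laurent polynomial $A'_{r,s}(t)$ carries exactly the same zero data, so that the two must be proportional. Concretely, Corollary \ref{cor:zero:1} bounds the number of zeros of $A_{r,s}(t)$ from above by $4rs-2r-2s$, while Corollary \ref{cor:zero:2} produces at least $4rs-2r-2s$ zeros, located (with multiplicity) at $S'\cup(-S')$. Hence $A_{r,s}(t)$ has \emph{exactly} $4rs-2r-2s$ zeros in $\bbC^\times$, with zero multiset precisely $S'\cup(-S')$. Since a nonzero Laurent polynomial is determined by its multiset of zeros in $\bbC^\times$ up to a monomial factor $c\,t^m$ ($c\in\bbC$, $m\in\bbZ$), it remains to verify that $A'_{r,s}(t)$ has the same zero multiset and then to pin down $m=0$.

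First I would identify the zeros of $A'_{r,s}(t)$. Writing each factor as $k t^{-1/2}+l t^{1/2}=t^{-1/2}(k+lt)$, a factor contributes a zero at $t=-k/l$ precisely when $l\neq0$, whereas the factors with $l=0$ or $k=0$ only contribute an overall power of $t$ and a nonzero constant. Splitting the index set by the signs of $k$ and $l$, the mixed-sign ranges ($k>0,l<0$ and $k<0,l>0$) yield the positive zeros while the same-sign ranges yield the negative ones. The key bookkeeping is then a multiset identity among grids: writing the positive zeros as the images under $(k,l)\mapsto k/l$ of the grids $[1,r]\times[1,s-1]$ and $[1,r-1]\times[1,s]$, and $S'$ as the images of $[1,r]\times[1,s]\setminus\{(r,s)\}$ and $[1,r-1]\times[1,s-1]$, one checks box-by-box (distinguishing the four cases according to whether $k=r$ and whether $l=s$) that these two multisets of pairs coincide. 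The same-sign ranges then give exactly $-S'$, so the zero multiset of $A'_{r,s}(t)$ equals $S'\cup(-S')$, as required.

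Finally I would remove the monomial ambiguity. A direct tally of the factors with $l=0$, with $k=0$, and with both nonzero shows that $A'_{r,s}(t)$ is a genuine Laurent polynomial in $t$ with $\max\deg A'_{r,s}(t)=2r(s-1)$ and $\min\deg A'_{r,s}(t)=-2(r-1)s$. Comparing $A_{r,s}(t)=c\,t^m A'_{r,s}(t)$ with the bounds of Lemma \ref{lem:maxmin}, namely $\max\deg A_{r,s}(t)\le 2r(s-1)$ and $\min\deg A_{r,s}(t)\ge-2(r-1)s$, forces $m\le0$ and $m\ge0$, hence $m=0$ and $A_{r,s}(t)=c\,A'_{r,s}(t)$. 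The main obstacle is the multiset bookkeeping in the second step: it is entirely elementary, but the exclusion of the corner $(r,s)$ together with the sign analysis over the four quadrants is exactly where an off-by-one slip would occur, so I would treat the four grid-overlap cases explicitly rather than appeal to a slick counting shortcut.
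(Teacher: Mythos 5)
Your proposal is correct and follows essentially the same route as the paper: combine Corollaries \ref{cor:zero:1} and \ref{cor:zero:2} to identify the zero multiset of $A_{r,s}(t)$ as exactly $S'\cup(-S')$, observe that $A'_{r,s}(t)$ has the same zeros, and use the degree bounds of Lemma \ref{lem:maxmin} (which are thereby forced to be equalities, $\max\deg=2r(s-1)$, $\min\deg=-2(r-1)s$) to eliminate the residual monomial ambiguity. The only difference is that you spell out the grid/multiset bookkeeping and the degree count for $A'_{r,s}(t)$, which the paper simply asserts; your computations check out.
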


\begin{proof}
By Corollary \ref{cor:zero:1} and Corollary \ref{cor:zero:2}, 
we have known that the set $S$ of zeros of $A_{r,s}(t)$ is equal to 
$S'\cup(-S')$. 
Then the inequalities in Lemma \ref{lem:maxmin} should be equalities: 
$\max\deg A_{r,s}(t)=2r(s-1)$, $\min\deg A_{r,s}(t)=-2(r-1)s$.

Next note that $A'_{r,s}(t)\in\bbC[t,t^{-1}]$, that the set of its zeros is
equal to $S'\cup (-S')$, 
and that $\max\deg A'_{r,s}(t)=2r(s-1)$, $\min\deg A'_{r,s}(t)=-2(r-1)s$.
Thus $A_{r,s}(t)$ and $A'_{r,s}(t)$ should be proportional.
\end{proof}

\subsection{Lemmas for the proof}\label{subsec:appendix}

\subsubsection{A lemma on Shapovalov form}

\begin{lem}\label{lem:shap}
(1)
\begin{align*}
\bra{c,h}L_{s}^r L_{-s}^r\ket{c,h}
=s^r r! \prod_{k=1}^{r} [2h+s(k-1)+\dfrac{s^2-1}{12}c]
\end{align*}

(2) In the expansion
\begin{align*}
\bra{c(t),h}L_{s}^{r} L_{-s}^{r} \ket{c(t),h}
=\sum_{k=0}^s (h-h_{r,s}(t))^{k} e_{k}^{(r,s)}(t) ,
\end{align*}
the maximum and the minimum degrees 
(see \eqref{eq:def:maxmin} for the definitions) 
of $e_1^{(r,s)}(t)$ are
\begin{align*}
\max\deg e_1^{(r,s)}(t)=0,\quad
\min\deg e_1^{(r,s)}(t)=\begin{cases}0 & r=s \\ 1-r & r\neq s \end{cases}.
\end{align*}
\end{lem}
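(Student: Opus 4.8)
The plan for part (1) is a direct computation in $U(\Vir)$ from the structure relations alone. The ingredients are $[L_s,L_{-s}]=2s\,L_0+\tfrac{C}{12}s(s^2-1)$ and $[L_0,L_{-s}]=s\,L_{-s}$, the latter giving $L_0\,L_{-s}^m\ket{c,h}=(h+ms)L_{-s}^m\ket{c,h}$. Commuting one $L_s$ rightward through $L_{-s}^r$ and using $L_s\ket{c,h}=0$, I would first establish
\begin{align*}
L_s\,L_{-s}^r\ket{c,h}=C_r(c,h)\,L_{-s}^{r-1}\ket{c,h},\qquad
C_r(c,h)=sr\Big[2h+s(r-1)+\tfrac{s^2-1}{12}c\Big],
\end{align*}
where the scalar arises as $C_r=\sum_{m=0}^{r-1}\big[2s(h+ms)+\tfrac{s(s^2-1)}{12}c\big]$. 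Since the right-hand side is again a scalar multiple of a power of $L_{-s}$, iterating $r$ times telescopes the matrix element into $\prod_{k=1}^{r}C_k(c,h)$; factoring out $s$ and $k$ from each $C_k$ gives precisely $s^r r!\prod_{k=1}^{r}[2h+s(k-1)+\tfrac{s^2-1}{12}c]$, which is the assertion of (1).

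For part (2) I would substitute $c=c(t)$ and expand the product of (1) in powers of $h-h_{r,s}(t)$. With $g_k(h,t)\seteq 2h+s(k-1)+\tfrac{s^2-1}{12}c(t)$, differentiating in $h$ gives
\begin{align*}
e_1^{(r,s)}(t)=2s^r r!\sum_{j=1}^{r}\prod_{k\neq j}G_k(t),\qquad
G_k(t)\seteq g_k(h_{r,s}(t),t).
\end{align*}
Substituting $h_{r,s}(t)$ from \eqref{eq:hrst} and $c(t)$ from \eqref{eq:ct}, the $t$ and $t^{-1}$ contributions cancel, leaving $G_k(t)=\tfrac{r^2-s^2}{2t}+E_k$ with $E_k=s(k-1)+1-rs+\tfrac{13(s^2-1)}{12}$. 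Hence each $G_k$ is a Laurent polynomial supported in degrees between $-1$ and $0$ when $r\neq s$, and the constant $E_k$ when $r=s$.

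The degree estimates then reduce to reading off the extreme coefficients of $e_1^{(r,s)}(t)$. As every $G_k$ has maximal $t$-degree $0$, we get $\max\deg e_1^{(r,s)}\le 0$, with the coefficient of $t^{0}$ equal to $2s^r r!\sum_{j}\prod_{k\neq j}E_k$. For $r\neq s$ the lowest power occurring is $t^{-(r-1)}$, to which only the terms $\tfrac{r^2-s^2}{2t}$ contribute; its coefficient is $2s^r r!\,r\big(\tfrac{r^2-s^2}{2}\big)^{r-1}\neq 0$, so $\min\deg e_1^{(r,s)}=1-r$. When $r=s$ every $G_k$ is the constant $E_k$, so $e_1^{(r,s)}$ is itself constant and both degrees equal $0$, provided that constant does not vanish.

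The only step that is not purely formal is the nonvanishing of the $t^0$-coefficient $\sum_{j}\prod_{k\neq j}E_k$, which upgrades $\max\deg e_1^{(r,s)}\le 0$ to an equality and supplies the $r=s$ case. Since the $E_k$ form an arithmetic progression of common difference $s$, writing $E_k=s(w+k-1)$ identifies this sum with $s^{r-1}\tfrac{d}{dw}\big[w(w+1)\cdots(w+r-1)\big]$ at $w=\tfrac{13s^2-1}{12s}-r$. A short estimate gives $w\ge 0$ whenever $r\le s$, so the derivative is a sum of products of positive quantities and is nonzero; this disposes of all $r\le s$ (in particular $r=s$). The residual case $r>s$, where $w\in(-(r-1),0)$, is the part I expect to be hardest: I must show the rational number $w$ is not a critical point of the rising factorial. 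The central critical point $-\tfrac{r-1}{2}$ is ruled out because it would force $\tfrac{13s^2-1}{6s}\in\bbZ$, impossible for $s\ge 2$, and the non-central ones I would exclude by a rational-root argument on $\tfrac{d}{dw}\big[w(w+1)\cdots(w+r-1)\big]$. I note finally that Lemma \ref{lem:maxmin} only invokes the inequality $\max\deg e_1^{(r,s)}\le 0$, so this last nonvanishing, while needed for the stated equality, is not essential for the main theorem.
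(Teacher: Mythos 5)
Your part (1) is the paper's own argument: commute one $L_s$ through $L_{-s}^r$, obtain the scalar $C_r(c,h)$, and telescope. Your computation is correct (and in fact your summand $2s(h+ms)$ is the right one; the recursion displayed in the paper's proof, with $2s(sk+h)$ summed over $k=1,\dots,r$, does not reproduce $K_1=(2h)$ at $r=s=1$, so you have silently corrected a typo). For part (2) the paper says only that it is ``an immediate consequence of (1)''; your explicit evaluation $G_k(t)=\tfrac{r^2-s^2}{2t}+E_k$ is correct, and it cleanly yields $\max\deg e_1^{(r,s)}\le 0$, $\min\deg e_1^{(r,s)}\ge 1-r$, and the exact value $\min\deg e_1^{(r,s)}=1-r$ for $r\neq s$ via the nonvanishing leading coefficient $r\bigl(\tfrac{r^2-s^2}{2}\bigr)^{r-1}$.

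The one genuine gap is the nonvanishing of the $t^0$-coefficient $\sum_j\prod_{k\neq j}E_k=s^{r-1}Q'(w)$, $Q(w)=w(w+1)\cdots(w+r-1)$, in the case $r>s\ge 2$, which is what upgrades $\max\deg e_1^{(r,s)}\le 0$ to the stated equality. Your positivity argument settles $r\le s$ and the case $s=1$ (where $w=1-r$ is a simple zero of $Q$), and the central critical point is correctly excluded, but the proposed rational-root argument for the non-central critical points is not carried out and does not obviously close all cases: the reduced denominator of $w=\tfrac{13s^2-1}{12s}-r$ is a multiple of $s$, so the rational root theorem only forces (roughly) $s\mid r$ together with a divisibility condition on the numerator against $(r-1)!$, and pairs such as $(r,s)=(15,5)$, $w=-48/5$, survive both tests. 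Some further input (e.g.\ a congruence argument: each $12E_k\equiv -12\pmod{s}$, so $12^{r-1}\sum_j\prod_{k\neq j}E_k\equiv r(-12)^{r-1}\pmod{s}$, which handles $\gcd(r,s)$ coprime to $s$) would be needed to finish. You are right, however, that Lemma \ref{lem:maxmin} only invokes the inequality $\max\deg e_1^{(r,s)}(t)\le 0$, so this residual case affects only the lemma as literally stated, not Theorem \ref{thm:main}.
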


\begin{proof}
By the defining relations of $\Vir$ we have
\begin{align*}
\bra{c,h}L_{s}^{r} L_{-s}^{r}\ket{c,h}
=\bra{c,h}L_{s}^{r-1} L_{-s}^{r-1} \ket{c,h}
 \cdot \sum_{k=1}^r \Big[\dfrac{s^3-s}{12}c+2s(s k+h)\Big].
\end{align*}
Then an easy calculation gives the result of (1).

(2) is an immediate consequence of (1).
\end{proof}

\subsubsection{A lemma on bosonization}

\begin{lem}\label{lem:a_-n}
Let $\lambda$ be a partition of $n$.
In the expansion
\begin{align*}
\varphi(L_{-\lambda})\ket{\alpha}_\calF
=\sum_{\mu \vdash n} c^{\mu}_\lambda(t,\alpha) a_{-\mu} \ket{\alpha}_\calF
 \quad
 c^{\mu}_\lambda(t,\alpha) \in \bbC[\alpha,t^{\pm 1/2}],
\end{align*}
the coefficient $c^{(n)}_{\lambda}(t,\alpha)$ of $a_{-(n)} \ket{\alpha}_\calF$ 
is of degree one in terms of $\alpha$.
\end{lem}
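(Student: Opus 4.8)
The plan is to work entirely on the bosonic side and to read off the coefficient of $a_{-(n)}\ket{\alpha}_\calF=a_{-n}\ket{\alpha}_\calF$ by tracking two gradings through the product $\varphi(L_{-\lambda})=\calL_{-\lambda_k}\cdots\calL_{-\lambda_1}$. The first step is to split each generator, using the realization \eqref{eq:FF}, as $\calL_{-m}=\alpha\,a_{-m}+B_{-m}$, where the scalar $\alpha$ arises solely from letting $a_0$ act on $\ket{\alpha}_\calF$, and $B_{-m}\seteq(m-1)\rho\,a_{-m}+\tfrac12\sum_{p\neq 0,-m}\no{a_p a_{-m-p}}$ is $\alpha$-independent. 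The structural fact I would isolate at the outset is that the only source of a power of $\alpha$ is the contraction of $a_0$, and that in $\calL_{-m}$ the mode $a_0$ always appears paired with $a_{-m}$; this is exactly why the $\alpha$-linear part of $\calL_{-m}$ is the clean monomial $\alpha\,a_{-m}$.

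Next I would introduce, for each monomial $\alpha^d a_{-\mu}\ket{\alpha}_\calF$, the two integers $d$ (the $\alpha$-degree) and $\ell(\mu)$ (the number of creation modes), and examine how a single elementary term of $\calL_{-m}$ changes them when acting on a state $a_{-\nu}\ket{\alpha}_\calF$. After replacing $a_0\mapsto\alpha$, every term of $\calL_{-m}$ is one of: $\alpha\,a_{-m}$, a two-creation term $a_{-q}a_{-(m-q)}$, a mixed term $\no{a_p a_{-m-p}}$ with indices of opposite sign, or $(m-1)\rho\,a_{-m}$. A short case check via the Heisenberg relations shows that $\ell$ never decreases and that $d-\ell$ never increases under any of these. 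Since we start from $\ket{\alpha}_\calF$, where $d-\ell=0$, and must land at $a_{-n}\ket{\alpha}_\calF$, where $\ell=1$, monotonicity of $d-\ell$ forces $d-1\le 0$; hence $c^{(n)}_\lambda(t,\alpha)$ has $\alpha$-degree at most one.

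To upgrade ``at most one'' to ``exactly one'' — which is the form actually used in Lemma \ref{lem:Pg} — I would pin down the unique surviving contribution. Because $\ell$ is non-decreasing and must terminate at $1$, exactly one factor raises $\ell$ by one and all others preserve it; and since the $\Delta\ell=0$ part of $\calL_{-m}$ is precisely its mixed part, which annihilates $\ket{\alpha}_\calF$, the single $\ell$-raising factor must be the innermost one, $\calL_{-\lambda_1}$. Every later factor then necessarily acts through its mixed part, whose effect on a single-mode state is $a_{-q}\ket{\alpha}_\calF\mapsto q\,a_{-(q+m)}\ket{\alpha}_\calF$. Composing these yields the closed form
\begin{align*}
c^{(n)}_\lambda(t,\alpha)
=\bigl(\alpha+(\lambda_1-1)\rho\bigr)\prod_{i=1}^{k-1}(\lambda_1+\cdots+\lambda_i),
\end{align*}
whose leading coefficient is a nonzero product of positive integers, so the degree is exactly one. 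I expect the main obstacle to lie in this last step: carefully bookkeeping the normal ordering and the $a_0$-contractions, and rigorously ruling out any term in which the $\alpha$-factor occurs at a non-initial position — which is exactly what the vacuum-annihilation property of the mixed terms guarantees.
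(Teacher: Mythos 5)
Your proof is correct and is essentially the argument of the paper: the paper proves the lemma by induction on $\ell(\lambda)$, with base case $\varphi(L_{-n})\ket{\alpha}_\calF=(\alpha+(n-1)\rho(t))a_{-n}\ket{\alpha}_\calF+\cdots$ and inductive step $c^{(n)}_\lambda=(n-\lambda_1)\,c^{(n-\lambda_1)}_{\nu}$ coming from the mixed term $a_{-n}a_{n-\lambda_1}$, which is exactly your chain of $\Delta\ell=0$ factors unrolled into the closed formula $(\alpha+(\lambda_1-1)\rho)\prod_{i=1}^{k-1}(\lambda_1+\cdots+\lambda_i)$. Your explicit monotonicity of $\ell$ and of $d-\ell$ just makes precise the step the paper leaves implicit (why only the single-mode component of the inner factors can feed into $a_{-n}\ket{\alpha}_\calF$), so the two proofs have the same content.
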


\begin{proof}
This is an easy consequence of the bosonization $\varphi$:
\begin{align*}
&\varphi(L_{-2n})= (2n-1) \rho(t) a_{-2n}
+\dfrac{a_{-n}^2}{2}+a_{-n-1}a_{-n+1}+a_{-n-2}a_{-n+2}+
\\
&\hskip 5em
\cdots+a_{-2n+1}a_{-1}+a_{-2n}a_{0}+a_{-2n-1}a_{1}+\cdots,
\\
&\varphi(L_{-2n-1})= 2n \rho(t)  a_{-2n}
+a_{-n-1}a_{-n}+a_{-n-2}a_{-n+1}+
\\
&\hskip 6em
\cdots+a_{-2n}a_{-1}+a_{-2n-1}a_{0}+a_{-2n-2}a_{1}+\cdots.
\end{align*}

In fact, for the case $\ell(\lambda)=1$, i.e., $\lambda=(n)$, 
the above expression of $\varphi(L_{-n})$ gives 
$\varphi(L_{-n})\ket{\alpha}_\calF
=(\alpha+(n-1)\rho(t))a_{n}\ket{\alpha}_\calF+\cdots$,
which is the desired consequence.
 
For the case $k\seteq\ell(\lambda)>1$, 
set $\nu\seteq(\lambda_2,\lambda_3,\ldots,\lambda_{k})$.
Since 
\begin{align*}
\varphi(L_{-\nu})\ket{\alpha}_\calF
=\sum_{\mu\,\vdash n-\lambda_1} 
 c^{\mu}_{\nu}(t,\alpha)a_{-\mu}\ket{\alpha}_\calF,
\end{align*}
we find that the term 
$a_{-n}\ket{\alpha}_\calF$ in 
$\varphi(L_{-\lambda})\ket{\alpha}_\calF
=\varphi(L_{-\lambda_1})\varphi(L_{-\nu})\ket{\alpha}_\calF$ appears as 
\begin{align*}
a_{-n}a_{n-\lambda_1} \cdot 
c^{(n-\lambda_1)}_{\nu}(t,\alpha) a_{-n+\lambda_1} \ket{\alpha}_\calF
=(n-\lambda_1) c^{(n-\lambda_1)}_{\nu}(t,\alpha) a_{-n} \ket{\alpha}_\calF.
\end{align*}
By the induction hypothesis, we know that 
$c^{(n-\lambda_1)}_{\nu}(t,\alpha)$ is 
of degree one as a polynomial of $\alpha$. 
Thus the desired consequence holds.
\end{proof}

\subsubsection{A coefficient of Jack symmetric function}

\begin{fct}\label{fct:HSS}
If one expands the power symmetric function $p_n$ by 
the family of monic Jack symmetric functions $\{P_\lambda^{(t)}\}$,
then 
\begin{align*}
p_n=n t \sum_{\lambda \vdash n} 
     \prod_{\square\in\lambda}
      \dfrac{1}{t a_\lambda(\square)+\ell_\lambda(\square)+t}
     \prod_{\substack{(i,j)\in\lambda \\ (i,j)\neq(1,1)}}
           (t (j-1)-(i-1))
     \cdot P_\lambda^{(t)}.
\end{align*}
(For the notations of Young diagrams, 
 see \S \ref{subsec:jack}.)
\end{fct}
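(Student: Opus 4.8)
The plan is to expand $p_n$ directly in the orthogonal Jack basis and read off the coefficients from the inner product. Since $p_n$ is homogeneous of degree $n$ and $\{P_\lambda^{(t)}\mid\lambda\vdash n\}$ spans the degree-$n$ part of $\Lambda_{\bbQ(t)}$, I may write $p_n=\sum_{\lambda\vdash n}c_\lambda(t)\,P_\lambda^{(t)}$, and orthogonality of the $P_\lambda^{(t)}$ gives
\begin{align*}
c_\lambda(t)=\frac{\langle p_n,P_\lambda^{(t)}\rangle_t}{\langle P_\lambda^{(t)},P_\lambda^{(t)}\rangle_t}.
\end{align*}
The denominator is furnished verbatim by the norm formula \eqref{eq:jack:P:norm}, so the entire problem collapses to evaluating the single pairing $\langle p_n,P_\lambda^{(t)}\rangle_t$.

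To do that, I would transfer the pairing to one power-sum coefficient. By $\langle p_\mu,p_\nu\rangle_t=\delta_{\mu,\nu}z_\mu t^{\ell(\mu)}$, only the $p_n$-component of $P_\lambda^{(t)}$ survives the pairing; with $z_{(n)}=n$ and $\ell((n))=1$ we have $\langle p_n,p_n\rangle_t=n t$, so $\langle p_n,P_\lambda^{(t)}\rangle_t$ equals $n t$ times the coefficient of $p_n$ in $P_\lambda^{(t)}$. It is cleanest to pass to the integral normalization \eqref{eq:jack:JP}: since $J_\lambda^{(t)}=P_\lambda^{(t)}\prod_{\square\in\lambda}\bigl(t a_\lambda(\square)+\ell_\lambda(\square)+1\bigr)$, the coefficient of $p_n$ in $P_\lambda^{(t)}$ is $\theta_\lambda^{(n)}(t)\big/\prod_{\square\in\lambda}\bigl(t a_\lambda(\square)+\ell_\lambda(\square)+1\bigr)$, where $\theta_\lambda^{(n)}(t)$ denotes the coefficient of $p_n$ in $J_\lambda^{(t)}$ --- exactly the quantity recorded in Corollary~\ref{cor:J:p}. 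Substituting this together with \eqref{eq:jack:P:norm} into the expression for $c_\lambda$, the factor $\prod_{\square}\bigl(t a_\lambda(\square)+\ell_\lambda(\square)+1\bigr)$ cancels and leaves
\begin{align*}
c_\lambda(t)=n t\,\theta_\lambda^{(n)}(t)\prod_{\square\in\lambda}\frac{1}{t a_\lambda(\square)+\ell_\lambda(\square)+t},
\end{align*}
which is precisely the asserted coefficient once the identity $\theta_\lambda^{(n)}(t)=\prod_{(i,j)\in\lambda,\ (i,j)\neq(1,1)}\bigl(t(j-1)-(i-1)\bigr)$ is in hand.

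The one genuinely nontrivial input --- and the step I expect to be the main obstacle --- is therefore this evaluation of $\theta_\lambda^{(n)}(t)$, the coefficient of the single longest power sum $p_{(n)}$ in $J_\lambda^{(t)}$. It is not accessible through the elementary $p_1$ Pieri rule, since multiplying by $p_1$ only appends parts equal to $1$ and can never create the part $n$; the longest power sum instead carries the full $t$-content datum $t(j-1)-(i-1)$ of the diagram. I would obtain it either by specializing the known power-sum expansion of Jack functions to $\mu=(n)$ (see \cite{St:1989} and \cite[Chap.~VI, \S10]{M:1995}), or by an induction on $n$ via a $p_n$-type (Murnaghan--Nakayama) recursion for Jack functions, tracking only the $p_{(n)}$ term so that the content factors telescope into the stated product. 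The answer passes the boundary checks $\theta_{(n)}^{(n)}(t)=t^{n-1}(n-1)!$ and $\theta_{(1^n)}^{(n)}(t)=(-1)^{n-1}(n-1)!$, and it agrees with the rectangular case $\lambda=(s^r)$ already displayed in Corollary~\ref{cor:J:p}.
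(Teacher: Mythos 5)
Your orthogonality reduction is correct as far as it goes: writing $c_\lambda(t)=\langle p_n,P_\lambda^{(t)}\rangle_t/\langle P_\lambda^{(t)},P_\lambda^{(t)}\rangle_t$, observing that only the $p_{(n)}$-component of $P_\lambda^{(t)}$ survives the pairing (contributing $\langle p_n,p_n\rangle_t=nt$), and cancelling the hook products between \eqref{eq:jack:JP} and \eqref{eq:jack:P:norm} does reduce the Fact exactly to the identity $\theta_\lambda^{(n)}(t)=\prod_{(i,j)\in\lambda,\,(i,j)\neq(1,1)}\bigl(t(j-1)-(i-1)\bigr)$. Your boundary checks for $\lambda=(n)$ and $\lambda=(1^n)$ are also right. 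But note that this runs the paper's logic backwards: the paper takes Fact \ref{fct:HSS} as input, citing \cite{HSS:1992} for it, and \emph{derives} Corollary \ref{cor:J:p} from it (together with \eqref{eq:jack:JP} and \eqref{eq:jack:P:norm}). You instead derive the Fact from the Corollary. Within this paper that is circular, since the only proof of Corollary \ref{cor:J:p} on offer goes through Fact \ref{fct:HSS}.

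You do recognize this and promise an independent evaluation of $\theta_\lambda^{(n)}(t)$, but that is precisely where the entire content of the statement lives, and your proposal only gestures at it. The two routes you suggest are not routine: there is no closed-form power-sum expansion of $J_\lambda^{(t)}$ to ``specialize to $\mu=(n)$'' (only the top coefficient admits the content-product form, which is the thing to be proved), and a Murnaghan--Nakayama-type recursion for Jack polynomials with the claimed telescoping of content factors is itself a nontrivial result that you would have to establish. So the proposal relocates the difficulty rather than resolving it; what remains to be proved is essentially the theorem of \cite{HSS:1992} that the paper cites. To make your argument self-contained you would need an actual proof of the $\theta_\lambda^{(n)}(t)$ identity --- for instance via the principal specialization/content-polynomial evaluation of $J_\lambda^{(t)}$ --- at which point your orthogonality computation would indeed give an alternative, and arguably cleaner, derivation of the Fact than a bare citation.
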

\begin{proof}
See \cite{HSS:1992}.
\end{proof}

\begin{cor}\label{cor:J:p}
Let $\lambda$ be a partition of $n$.
If one expands the integral Jack symmetric function $J_\lambda^{(t)}$
by the family of power symmetric function $\{p_\mu\}$ as
\[J_{\lambda}^{(t)}=\sum_{\mu\vdash n} \theta_\lambda^\mu(t)p_\mu\] 
then the following holds:
\begin{align*}
\theta_\lambda^{(n)}(t)
=\prod_{\substack{(i,j)\in\lambda \\ (i,j)\neq(1,1)}} ((j-1)t-(i-1)).
\end{align*}
\end{cor}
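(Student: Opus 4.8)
The plan is to recover $\theta_\lambda^{(n)}(t)$ by pairing $J_\lambda^{(t)}$ against $p_n$ in the inner product $\langle\cdot,\cdot\rangle_t$, using that the power sums are orthogonal for this form. Since $\langle p_\mu,p_\nu\rangle_t=\delta_{\mu,\nu}z_\mu t^{\ell(\mu)}$ with $z_{(n)}=n$ (see \eqref{eq:z_lambda}) and $\ell((n))=1$, pairing the expansion $J_\lambda^{(t)}=\sum_\mu\theta_\lambda^\mu(t)p_\mu$ with $p_n$ annihilates every term except $\mu=(n)$ and gives
\[\langle J_\lambda^{(t)},p_n\rangle_t=nt\,\theta_\lambda^{(n)}(t).\]
Thus it suffices to evaluate the left-hand side.

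First I would rewrite $J_\lambda^{(t)}$ in terms of the monic Jack function via the definition \eqref{eq:jack:JP}, namely $J_\lambda^{(t)}=c_\lambda(t)P_\lambda^{(t)}$ with $c_\lambda(t)\seteq\prod_{\square\in\lambda}(t a_\lambda(\square)+\ell_\lambda(\square)+1)$, so that $\langle J_\lambda^{(t)},p_n\rangle_t=c_\lambda(t)\langle P_\lambda^{(t)},p_n\rangle_t$. The problem is now reduced to the single pairing $\langle P_\lambda^{(t)},p_n\rangle_t$, which I would read off from Fact \ref{fct:HSS}: that fact expands $p_n$ in the orthogonal basis $\{P_\mu^{(t)}\}$, so pairing with $P_\lambda^{(t)}$ and invoking orthogonality isolates the coefficient of $P_\lambda^{(t)}$ times the norm $\langle P_\lambda^{(t)},P_\lambda^{(t)}\rangle_t$.

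Substituting the norm formula \eqref{eq:jack:P:norm}, the factor $\prod_{\square}(t a_\lambda(\square)+\ell_\lambda(\square)+t)^{-1}$ carried by Fact \ref{fct:HSS} cancels against the numerator of the norm, leaving
\[\langle P_\lambda^{(t)},p_n\rangle_t=nt\prod_{\substack{(i,j)\in\lambda\\ (i,j)\neq(1,1)}}(t(j-1)-(i-1))\prod_{\square\in\lambda}\frac{1}{t a_\lambda(\square)+\ell_\lambda(\square)+1}.\]
The last product is exactly $c_\lambda(t)^{-1}$, so multiplying by $c_\lambda(t)$ and dividing by $nt$ yields $\theta_\lambda^{(n)}(t)=\prod_{(i,j)\neq(1,1)}((j-1)t-(i-1))$, as claimed.

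The argument is conceptually routine; the only thing to watch is the bookkeeping in the product cancellation — precisely that the denominator $\prod_\square(t a_\lambda(\square)+\ell_\lambda(\square)+1)$ left over from combining Fact \ref{fct:HSS} with \eqref{eq:jack:P:norm} coincides with the normalization $c_\lambda(t)$ that turns $P_\lambda^{(t)}$ into $J_\lambda^{(t)}$, so the two cancel and no hook data survives. I expect no genuine obstacle: the content is simply that the coefficient of $p_{(n)}$ in $J_\lambda^{(t)}$ is dual, through $\langle\cdot,\cdot\rangle_t$, to the coefficient of $P_\lambda^{(t)}$ in $p_n$ supplied by Fact \ref{fct:HSS}.
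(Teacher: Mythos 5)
Your proposal is correct and is essentially the paper's own argument: the paper's proof just cites Fact \ref{fct:HSS}, the ratio \eqref{eq:jack:JP}, and the norm \eqref{eq:jack:P:norm}, and your duality computation via $\langle J_\lambda^{(t)},p_n\rangle_t = nt\,\theta_\lambda^{(n)}(t)$ is exactly how those three ingredients combine. The cancellation bookkeeping you flag (the $ta_\lambda(\square)+\ell_\lambda(\square)+t$ factors between Fact \ref{fct:HSS} and the norm, then the $ta_\lambda(\square)+\ell_\lambda(\square)+1$ factors against the $J/P$ normalization) checks out, as does $z_{(n)}t^{\ell((n))}=nt$.
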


\begin{proof}
The coefficient $c_\lambda$ can be calculated by Fact \ref{fct:HSS}, 
the ratio \eqref{eq:jack:JP} of 
$P_\lambda^{(t)}$ and $J_\lambda^{(t)}$,
and the norm \eqref{eq:jack:P:norm} of $P_\lambda^{(t)}$.
\end{proof}


\section{Conclusion and Remarks}\label{sec:rmk}

The main result of this paper is Theorem \ref{thm:main}, 
a mathematical proof of \eqref{eq:Nrs}.
In our proof, important roles are played by Feigin-Fuchs  
bosonization and Jack symmetric functions. 
The technical but crucial point of our discussion is Fact \ref{fct:sing:jack} 
that the bosonized singular vector of Virasoro algebra 
is proportional to Jack symmetric function.
There are analogous phenomena of this fact in other algebras, such as 
$\calW$ algebras and the deformed Virasoro algebra.
So it may be possible to simulate Theorem \ref{thm:main} for these algebras.
Let us discuss the possibility of such extensions 
and also mention to the related AGT conjectures.

The paper \cite{AMOS:1995} showed that singular vectors of 
$\calW(\frksl_n)$ algebra are proportional to 
Jack symmetric functions associated to 
general (i.e., not necessarily rectangle) partitions.
But the proportional factor is not known so far, 
which is an obstruction to simulate our strategy to calculate 
the norm of logarithmic primaries in $\calW$ algebra case. 

Let us also mention to the $\SU(n)$ AGT conjecture, 
where $\calW(\frksl_n)$ algebra appears (see e.g. \cite{W:2008}).
In \cite{T:2009} a pure gauge AGT conjecture for $\calW(\frksl_3)$ algebra
was proposed. 
In this case, one can observe Zamolodchikov-type recursive formula, 
so that the strategy for the proof of the conjecture may be built.
However in order to execute this strategy, 
it is necessary to overcome the obstruction mentioned above . 
Note that the `finite analog' of this AGT conjecture is solved recently 
by methods in geometric representation theory \cite{BFRF:2010}.

Another possibility is the proof of the $q$-deformed/five-dimensional 
AGT conjecture proposed in \cite{AY:2009}. 
It is conjectured that 
the $K$-theoretic Nekrasov partition function coincides with the norm of 
the Gaiotto state in the Verma module of the deformed Virasoro algebra. 
In \cite{Y:2010} the author gave a recursive formula for the $K$-theoretic 
Nekrasov partition function. 
However, our knowledge on the singular vectors is not enough to 
give some proof of the recursive formula in the deformed Virasoro side.
It is known that the singular vectors of deformed Virasoro algebra are 
proportional to Macdonald symmetric functions associated to rectangle partitions \cite{SKAO:1996}, 
but their proportional factors are not known. 
Also the degree estimation of the norm of logarithmic primaries in the deformed case is not known.


\begin{ack}
The author is supported by JSPS Fellowships for Young Scientists (No.21-2241) 
and JSPS/RFBR joint project 
`Integrable system, random matrix, algebraic geometry and geometric invariant'.
Results in this paper were presented at the workshops 
`Topics on BC systems and AGT conjectures', Tokyo, September 2010, and 
`Synthesis of integrabilities in
the context of duality between strings and gauge theories', Moscow, 
September 2010. 
Thanks are due to the organizers and participants 
of these conferences for stimulating discussion.
The author also expresses gratitude to the adviser Professor K\={o}ta Yoshioka 
and Professor Yasuhiko Yamada for the valuable discussion.
\end{ack}



\begin{thebibliography}{99}

\bibitem{AF:1997}
Astashkevich,~A., Fuchs,~D.: 
Asymptotics for singular vectors in Verma modules over the Virasoro algebra. 
Pacific J. Math. {\bf 177}, no. 2, 201--209 (1997). 

\bibitem{AGT:2010}
Alday~L.~F., Gaiotto~D., Tachikawa~Y.: 
Liouville correlation functions from four-dimensional gauge theories.  
Lett. Math. Phys. {\bf 91}, 167--197 (2010).

\bibitem{AMOS:1995}
Awata,~H., Matsuo,~Y., Odake,~S., Shiraishi,~J.: 
Excited states of the Calogero-Sutherland model 
 and singular vectors of the $W_N$ algebra.  
Nucl. Phys. B {\bf 449}, 347--374 (1995).

\bibitem{AY:2009}
Awata,~H., Yamada,~Y.: 
Five-dimensional AGT conjecture and the deformed Virasoro algebra.
JHEP 1001:125 (2010).


\bibitem{BFIZ:1991}
Bauer,~M., Di Francesco,~Ph., Itzykson,~C., Zuber,~J.~B.: 
Singular vectors of the Virasoro algebra. 
Phys. Lett. {\bf B 260}, no. 3-4, 323--326  (1991). 

\bibitem{BPZ:1984}
Belavin,~A.~A., Polyakov,~A.~M., Zamolodchikov,~A.~B.: 
Infinite conformal symmetry in two-dimensional quantum field theory. 
Nuclear Phys. {\bf B 241}, no. 2, 333--380 (1984). 

\bibitem{BZ:2006}
Belavin~A., Zamolodchikov~Al.~B.: 
Higher equations of motion in N = 1 SUSY Liouville field theory. 
JETP Lett. {\bf 84}, 418--424  (2006). 
arXiv: hep-th/0610316.

\bibitem{BS:1988}
Benoit,~L., Saint-Aubin,~Y.:
Degenerate conformal field theories and explicit expressions for some null vectors. 
Phys. Lett. {\bf B 215}, no. 3, 517--522 (1988). 

\bibitem{BFRF:2010}
Braverman,~A., Feigin,~B., Rybnikov,~L., Finkelberg,~M:
A finite analog of the AGT relation I: finite W-algebras and quasimaps' spaces.
arXiv:1008.3655.

\bibitem{ES:1985}
Etingof,~P., Styrkas,~K.:
Algebraic integrability of Schr\"{o}dinger operators and representations of Lie algebras. 
Compositio Math. {\bf 98}, no. 1, 91--112 (1995). 

\bibitem{FL:2010}
Fateev,~V.~A., Litvinov,~A.~V.:
On AGT conjecture. 
JHEP 1002:014 (2010).

\bibitem{FFr:1996}
Feigin,~B.~L., Frenkel,~E.:
Quantum $\mathcal{W}$-algebras and elliptic algebras.
Commun. Math. Phys. {\bf 178}, 653--678 (1996) .


\bibitem{FF:1982}
Feigin,~B.~L., Fuchs,~D.~B.:  
Skew-symmetric invariant differential operators on the line 
 and Verma modules over the Virasoro algebra. 
Funct. Anal. Appl. {\bf 16}, 47--63, 96 (1982).

\bibitem{FF:1990}
Feigin,~B.~L., Fuchs,~D.~B.: 
Representations of the Virasoro algebra. 
In: Representation of Lie groups and related topics, 465--554, 
Adv. Stud. Contemp. Math. {\bf 7}, Gordon and Breach (1990). 

\bibitem{FMS:1997}
Di Francesco,~P., Mathieu,~P., S\'{e}n\'{e}chal,~D.: 
Conformal field theory. 
Graduate Texts in Contemporary Physics. Springer-Verlag (1997).

\bibitem{Fr:2007}
Frenkel,~E.:
Langlands correspondence for loop groups. 
Cambridge Studies in Advanced Mathematics, {\bf 103}. 
Cambridge University Press (2007).

\bibitem{FrB:2004}
Frenkel,~E., Ben-Zvi,~D.: 
Vertex algebras and algebraic curves, 2nd edn. 
Mathematical Surveys and Monographs, {\bf 88}. Amer. Math. Soc. (2004).

\bibitem{F:1994}
Fuchs,~D.~B.:
Singular vectors over the Virasoro algebra and extended Verma modules. 
In: Unconventional Lie algebras, 65--74, 
Adv. Soviet Math., {\bf 17}, Amer. Math. Soc. (1993). 

\bibitem{G:2009}
Gaiotto,~D.:  
Asymptotically free $N=2$ theories and irregular conformal blocks. 
arXiv:0908.0307 [hep-th].

\bibitem{HJS:2006}
Hadasz,~L., Jask\'{o}lski,~Z., Suchanek,~P.: 
Recursion representation of the Neveu-Schwarz superconformal block. 
JHEP 0703:032 (2007).

\bibitem{HJS:2008a}
Hadasz,~L., Jask\'{o}lski,~Z., Suchanek,~P.: 
Elliptic recurrence representation of the $N=1$ Neveu-Schwarz blocks. 
Nucl. Phys. {\bf B 798}, 363-378 (2008).

\bibitem{HJS:2008b}
Hadasz,~L., Jask\'{o}lski,~Z., Suchanek,~P.: 
Elliptic recurrence representation of the $N=1$ superconformal blocks in the Ramond sector.
JHEP 0811:060 (2008).

\bibitem{HJS:2010}
Hadasz,~L., Jask\'{o}lski,~Z., Suchanek,~P.: 
Proving the AGT relation for $N_f=0,1,2$ antifundamentals. 
JHEP 1006:046 (2010).


\bibitem{HSS:1992}
Hanlon, P.~J., Stanley, R.~P., Stembridge, J.~R.: 
Some combinatorial aspects of the spectra of normally distributed random matrices. In: Hypergeometric functions on domains of positivity, Jack polynomials, and applications (Tampa, FL, 1991), 151--174, 
Contemp. Math., {\bf 138}, Amer. Math. Soc. (1992). 

\bibitem{IMM:1992}
Imbimbo,~C., Mahapatra,~S., Mukhi~S.: 
Construction of physical states of nontrivial ghost number 
in $c<1$ string theory. 
Nuclear Phys. {\bf B 375}, no. 2, 399--420 (1992). 

\bibitem{ID:1989}
Itzykson,~C., Drouffe,~J.~M.: 
Statistical field theory. Vol. 2. Strong coupling, Monte Carlo methods, conformal field theory, and random systems. 
Cambridge Monographs on Mathematical Physics. 
Cambridge Univ. Press, Cambridge (1989).

\bibitem{J:1979}
Jantzen,~J.~C.: 
Moduln mit einem h\"{o}chsten Gewicht.
Lect. Notes in Math. {\bf 750}, Springer (1979).

\bibitem{K:1979}
Kac,~V.~G.:
Contravariant form for infinite-dimensional Lie algebras and superalgebras.
Lect. Notes in Phys. {\bf 94}, 441--445 (1979).

\bibitem{K:1991}
Kent,~A.: 
Singular vectors of the Virasoro algebra. 
Phys. Lett. {\bf B 273}, no. 1-2, 56--62 (1991). 

\bibitem{K:1992}
Kent,~A.: 
Projections of Virasoro singular vectors. 
Phys. Lett. {\bf B 278}, no. 4, 443--448 (1992). 

\bibitem{KM:1988}
Kato,~M., Matsuda,~S.: 
Null field construction in conformal and superconformal algebras. 
In: Conformal field theory and solvable lattice models (Kyoto, 1986), 
205--254, Adv. Stud. Pure Math., {\bf 16}, Academic Press, San Diego (1988).

\bibitem{M:1995}
Macdonald,~I.~G.: 
Symmetric functions and Hall polynomials. 2nd ed. 
Oxford Mathematical Monographs, Oxford University Press (1995).


\bibitem{MMM:2009}
Marshakov,~A., Mironov,~A., Morozov,~A.: 
On non-conformal limit of the AGT relations.
Phys. Lett. B {\bf 682}, 125--129 (2009).

\bibitem{MY:1995}
Mimachi,~K., Yamada,~Y.: 
Singular vectors of the Virasoro algebra 
in terms of Jack symmetric polynomials.
Commun. Math. Phys. {\bf 174}, 447--455 (1995).  

\bibitem{N:2003}
Nekrasov,~N.~A.: 
Seiberg-Witten prepotential from instanton counting.
Adv. Theor. Math. Phys. {\bf 7}, no. 5, 831--864 (2003). 

\bibitem{P:2009}
Poghossian,~R.: 
Recursion relations in CFT and $N=2$ SYM theory
JHEP 0912:038 (2009).

\bibitem{SSAFR:2005}
Sakamoto,~R., Shiraishi,~J., Arnaudon,~D., Frappat,~L., Ragoucy,~E.: 
Correspondence between conformal field theory and Calogero-Sutherland model.
Nucl. Phys. {\bf B 704}, 490-509 (2005).

\bibitem{S:2003}
Shiraishi,~J.:
Lectures on Quantum Integrable Systems (in Japanese).
SGC Library vol {\bf 28}, 
Saiensu-sha, (2003).


\bibitem{SKAO:1996}
Shiraishi,~J., Kubo,~H., Awata,~H., Odake,~S.:
A quantum deformation of the Virasoro algebra and the Macdonald symmetric functions.
Lett. Math. Phys. {\bf 38}, no. 1, 33--51 (1996). 

\bibitem{St:1989}
Stanley,~R.~P.:
Some combinatorial properties of Jack symmetric functions. 
Adv. Math. {\bf 77}, no. 1, 76--115 (1989). 

\bibitem{T:2009}
Taki.,~M.: 
On AGT Conjecture for Pure Super Yang-Mills and W-algebra.
arXiv: 0912.4789 [hep-th].


\bibitem{TK:1986}
Tsuchiya,~A., Kanie,~Y.:
Fock space representations of the Virasoro algebra. Intertwining operators. 
Publ. Res. Inst. Math. Sci. {\bf 22}, no. 2, 259--327 (1986). 

\bibitem{W:2008}
Wyllard,~N.:
$A_{N-1}$ conformal Toda field theory correlation functions 
from conformal $\mathcal{N} = 2$ $\SU(N)$ quiver gauge theories.
JHEP 0911:002 (2009).

\bibitem{Y:2010}
Yanagida,~S.: 
Five-dimensional SU(2) AGT conjecture and recursive formula of deformed Gaiotto state.
J. Math. Phys. {\bf 51}, 123506 (2010).

\bibitem{Z:1984}
Zamolodchikov,~Al.~B.: 
Conformal symmetry in two dimensions: an explicit recurrence formula for the conformal partial wave amplitude. 
Comm. Math. Phys. {\bf 96}, no. 3, 419--422 (1984). 

\bibitem{Z:1987}
Zamolodchikov,~Al.~B.: 
Conformal symmetry in two-dimensional space: on a recurrent representation of the conformal block. 
Teoret. Mat. Fiz. {\bf 73}, no. 1, 103--110 (1987). 

\bibitem{Z:2004}
Zamolodchikov,~Al.~B.: 
Higher equations of motion in Liouville field theory. 
Proceedings of 6th International Workshop on Conformal Field Theory and Integrable Models. 
Internat. J. Modern Phys. {\bf A 19}, May, suppl., 510--523 (2004).
arXiv: hep-th/0312279.

\bibitem{ZZ:1990}
Zamolodchikov,~A.~B., Zamolodchikov,~Al.~B.: 
Conformal field theory and 2-D critical phenomena. 3. 
Conformal bootstrap and degenerate representations of conformal algebra.
ITEP-90-31, preprint (1990).


\bibitem{ZZ:1996}
Zamolodchikov,~A.~B., Zamolodchikov,~Al.~B.: 
Conformal bootstrap in Liouville field theory. 
Nuclear Phys. {\bf B 477}, no. 2, 577--605 (1996). 


\bibitem{ZZ:2001}
Zamolodchikov,~A.~B., Zamolodchikov,~Al.~B.: 
Liouville field theory on a pseudosphere.
arXiv: hep-th/0101152.
\end{thebibliography}
\end{document}